\numberwithin{equation}{section}
\newtheorem{theorem}{Theorem}[section]
\newtheorem{lemma}[theorem]{Lemma}
\newtheorem{corollary}[theorem]{Corollary}
\newtheorem{proposition}[theorem]{Proposition}
\newtheorem{conjecture}[theorem]{Conjecture}
\theoremstyle{definition}
\newtheorem{definition}[theorem]{Definition}
\newtheorem{example}[theorem]{Example}
\newtheorem{problem}[theorem]{Problem}
\theoremstyle{remark}
\newcommand{\N}{\mathbb{N}}
\DeclareMathOperator{\gap}{gap}
\title{Novel structures in Stanley sequences}
\author{Richard A. Moy}
\thanks{Northwestern University, Evanston IL. Email: \texttt{ramoy88@math.northwestern.edu}}
\author{David Rolnick}
\thanks{Massachusetts Institute of Technology, Cambridge MA. Email: \texttt{drolnick@math.mit.edu}}
\begin{document}

\begin{abstract}
Given a set of integers with no three in arithmetic progression, we construct a \emph{Stanley sequence} by adding integers greedily so that no arithmetic progression is formed. This paper offers two main contributions to the theory of Stanley sequences. First, we characterize well-structured Stanley sequences as solutions to constraints in modular arithmetic, defining the \emph{modular Stanley sequences}.  Second, we introduce the \emph{basic Stanley sequences}, where elements arise as the sums of subsets of a \emph{basis} sequence, which in the simplest case is the powers of 3. Applications of our results include the construction of Stanley sequences with arbitrarily large gaps between terms, answering a weak version of a problem by Erd\H{o}s et al. Finally, we generalize many results about Stanley sequences to $p$-free sequences, where $p$ is any odd prime.
\end{abstract}

\maketitle


\section{Introduction}

A set of nonnegative integers is said to be \emph{$p$-free} if it contains no $p$-term arithmetic progressions. There is great interest in finding the maximum cardinality $r_p(n)$ of a $p$-free subset of $\{0,1,\ldots,n\}$. Some work has gone into computing explicit values of $r_p(n)$ for small values of $p$ and $n$ \cite{ET,Wag1,Wag2,W,GGK,D,ADS}. However, the vast majority of research about $r_p(n)$ has involved its asymptotic behavior. In 1936, Erd\H{o}s and Tur\'an \cite{ET} stated a conjecture of Szekeres that, for an odd prime $p$, implied $r_p(n)=\Theta(n^{\log_p(p-1)})$. Szekeres' conjecture was disproven by Salem and Spencer \cite{SS} in 1942 when they showed that as $n\rightarrow\infty$, one has $r_3(n)>n^{1-\frac{\log 2+\epsilon}{\log\log n}}$ for every $\epsilon>0$.

This bound was improved by Behrend \cite{B} in 1946, when he proved that as $N\rightarrow\infty$, one has $r_3(n)>n^{1-\frac{2\sqrt{2\log 2}+\epsilon}{\sqrt{\log n}}}$ for every $\epsilon>0$. This result was slightly improved by Elkin \cite{E} in 2008, and a shorter proof of Elkin's result was soon discovered by Green and Wolf \cite{GW}. Unfortunately, these lower bounds for $r_3(n)$ are far from the upper bounds $r_3(n)=O\left(\frac{n}{\log \log n}\right)$ achieved by Roth \cite{Roth}, $r_3(n)=O\left(\frac{n}{(\log n)^{\frac{1}{20}}}\right)$ achieved by Heath-Brown \cite{HB}, and then $r_3(n)=O\left(\frac{n (\log\log n)^5}{\log n} \right) $ achieved by Sanders \cite{S}.

In 1978, Odlyzko and Stanley \cite{OS} proposed constructing $3$-free sequences according to the greedy algorithm.

\begin{definition}
Let $A=\{a_0,\ldots,a_k\}$ be a $3$-free set of nonnegative integers satisfying $a_0<\cdots<a_k$. We define the \emph{Stanley sequence} $S(A)=(a_n)$ generated by $A$ recursively as follows. If $a_0<\cdots<a_n$ have already been defined, then $a_{n+1}$ is the smallest positive integer greater than $a_n$ such that $\{a_0,\ldots,a_n,a_{n+1}\}$ is $3$-free.

In a slight abuse of notation, we will often write $S(a_0,\ldots,a_k)$ for $S(\{a_0,\ldots,a_k\})$.  We will also sometimes consider the sequence $S(A)$ as a set.
\end{definition}

The name ``Stanley sequences'' originates with Erd\H{o}s et al.~\cite{ELRSS}, who generalized the procedure above from the case of $|A|=2$, as originally proposed by Odlyzko and Stanley. The simplest Stanley sequence is $S(0)=0,1,3,4,9,10,12,13,27,\ldots,$ the elements of which are exactly those integers with no 2's in their ternary expansion; the growth rate of this sequence matches the proposed bound of Szekeres and indeed motivated this conjecture. Remarkably, Stanley sequences appear to exhibit two distinct patterns of asymptotic growth \cite{OS,EG,ELRSS}, with no intermediate growth rate possible.  While some sets $A$, such as $A=\{0\}$, lead to highly ordered Stanley sequences $S(A)$ (\emph{Type 1} growth), others lead to chaotic sequences (\emph{Type 2} growth). The original conjecture of Odlyzko and Stanley was generalized by Rolnick in \cite[Conjecture 1.1]{R} as follows.

\begin{conjecture}
\label{conj:main}
Let $S(A)=(a_n)$ be a Stanley sequence. Then, for all $n$ large enough, one of the following two patterns of growth is satisfied.

\begin{enumerate}
\item $\alpha/2\le \liminf a_n/n^{\log_2 3}\le \limsup a_n/n^{\log_2 3}\le \alpha$, or
\item{ $a_n=\Theta(n^2\slash\log n).$ }
\end{enumerate}

\end{conjecture}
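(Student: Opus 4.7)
The conjecture asserts a dichotomy between two sharply separated growth rates, so the plan is to attack the two regimes separately and then argue that no intermediate behavior is possible. For the structured side, I would lean on the modular and basic Stanley sequences introduced earlier in this paper: show that every Stanley sequence which eventually satisfies the modular characterization exhibits Type 1 growth, with $\alpha$ determined by the generating set $A$. The prototype analysis for $S(0)$ is transparent: the elements are integers with ternary expansions in $\{0,1\}$, so after including $2^k$ terms one has reached $3^k$, giving $a_n = n^{\log_2 3}$ essentially exactly. For a general modular $S(A)$, a block decomposition at the scale of the modular period should yield both $\limsup a_n/n^{\log_2 3} \le \alpha$ and $\liminf a_n/n^{\log_2 3} \ge \alpha/2$, the factor of two arising from oscillation between block boundaries.

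For the chaotic side, I would try to show that any $S(A)$ which fails to become eventually modular has growth $a_n = \Theta(n^2/\log n)$. The heuristic is a counting argument: each ordered pair $(a_i, a_j)$ with $i<j$ forbids the position $2a_j - a_i$, so roughly $\binom{n}{2}$ forbidden positions lie below $2a_{n-1}$. Without structural coincidences forcing these forbidden positions to collide, the greedy construction must place $a_n$ at roughly the $n$-th unforbidden integer, forcing quadratic growth. The logarithmic correction should then come from plugging in the strongest available $r_3$ bound: the $3$-freeness of $\{a_0, \ldots, a_n\}$ inside $[0, a_n]$ limits how dense the sequence can be, and Sanders' bound should sharpen the heuristic to $a_n = \Theta(n^2/\log n)$.

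The genuinely hard step is the \emph{dichotomy} itself, i.e., ruling out intermediate growth rates such as $n^{3/2}$ or $n^{1.8}$. This calls for a rigidity theorem: failure of the modular structure at any finite scale must eventually force a positive-density cascade of ``extra'' elements that propagates through the greedy construction and lifts the growth rate all the way to quadratic. My attempt would be an induction on the smallest $k$ at which $S(A)$ fails to be periodic modulo $3^k$; each such failure should inject extra elements at scale $3^k$, which in turn force further failures at scale $3^{k+1}$, and so on. Proving that this cascade has density bounded below by a universal constant, and does not reach an equilibrium that mimics Type 1 growth at some intermediate rate, looks like the core obstruction and is likely why the conjecture remains open in full generality.
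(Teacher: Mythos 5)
This statement is Conjecture~1.1, not a theorem: the paper offers no proof of it, and in fact explicitly notes that ``no Stanley sequence, including $S(0,4)$, has been definitively proven to satisfy Type~2 growth.'' The paper's contribution is only partial progress on the Type~1 side, namely that modular (and pseudomodular) sequences satisfy Type~1 growth (Corollary~\ref{cor:modgrowth} and the discussion after Theorem~\ref{thm:pseudomod}). So there is no paper proof to compare your attempt against, and your proposal should not be read as recovering one.

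On the merits of the proposal itself, there are two substantive gaps beyond the one you already flag. First, the logarithmic correction in $\Theta(n^2/\log n)$ does not come from plugging in Sanders' bound on $r_3$. Sanders gives $r_3(N)=O\bigl(N(\log\log N)^5/\log N\bigr)$, which, applied to the $3$-free set $\{a_0,\ldots,a_n\}\subset[0,a_n]$, yields only $a_n\gtrsim n\log n/(\log\log n)^5$ --- nowhere near quadratic. The $n^2/\log n$ formula is the Odlyzko--Stanley heuristic derived from a probabilistic model of where forbidden positions fall, not from any $r_3$ estimate, and there is currently no rigorous route from $r_3$ bounds to quadratic lower bounds on greedy sequences. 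Second, the proposed induction on the smallest $k$ at which periodicity modulo $3^k$ fails has no mechanism to control how ``extra'' elements injected at one scale influence later scales; in the greedy construction a single anomalous element can cascade unpredictably, and nothing in the modular framework of this paper gives the density lower bound your cascade argument would need. You correctly identify the dichotomy as the hard step, and you are right that it is the reason the conjecture remains open --- but that is precisely why this outline cannot be promoted to a proof.
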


Although only the case $\alpha=1$ was considered by Odlyzko and Stanley \cite{OS}, Rolnick and Venkataramana have shown \cite{RV} that every rational number $\alpha=1$ is possible for which the denominator is a power of 3. Odlyzko and Stanley showed that \emph{Type 2} growth is, in some sense, the ``expected'' growth of a Stanley sequence, assuming that elements occur in the sequence according to a continuous probability distribution. This justifies the formula $\Theta(n^2\slash\log n)$, which has been experimentally verified by Lindhurst \cite{L} up to large values of the sequence $S(0,4)$. However, no Stanley sequence, including $S(0,4)$, has been definitively proven to satisfy \emph{Type 2} growth. In this paper, we will study the behavior of \emph{Type 1} sequences, thus making progress towards Conjecture \ref{conj:main}.

Erd\H{o}s et al.~posed several problems on the the asymptotic behavior of Stanley sequences. In \cite{M}, Moy solved Problem 1 of \cite{ELRSS} by proving that in any Stanley sequence $(a_n)$, the terms $a_n$ grow no faster than $n^2/(2+\epsilon)$, where $\epsilon$ is an arbitrary constant. An effective lower bound (Problem 2) remains open; that is, proving $\liminf \log a_n/\log n<1$, where the $\liminf$ is conjectured to be $\log_2 3$.

Erd\H{o}s et al.~also consider the gaps between consecutive elements, asking whether there exists a Stanley sequence $(a_n)$ for which $\liminf (a_{n+1}-a_n)=\infty$ (Problem 4, \cite{ELRSS}). A weaker version of this question (Problem 6) was answered in the affirmative by Savchev and Chen \cite{SC}, who constructed a 3-free sequence $(a_n)$ satisfying $\liminf (a_{n+1}-a_n)=\infty$ for which no integer can be added without violating the 3-free property; this sequence is however not a Stanley sequence. Among the results in this paper, we show that there exist Stanley sequences for which $\liminf (a_{n+1}-a_n)$ is arbitrarily large.

As noted in Erd\H{o}s and Graham \cite[page 22]{EG}, sequences like $S(0,4)$ seem to admit no closed-form description. However, Rolnick \cite{R} has extensively studied sequences that exhibit \emph{Type 1} growth, constructing many novel sequences of this form.  In particular, \cite{R} introduces the concept of independent and regular Stanley sequences, which satisfy \emph{Type 1} growth. Rolnick conjectures that these are in fact the \emph{only} \emph{Type 1} sequences; in this paper, however, we will show that the definitions must be slightly modified for this conjecture to hold.

\begin{definition}\label{def:indep}
A Stanley sequence $S(A)=(a_n)$ is \emph{independent} if there exist constants $\lambda=\lambda(A)$ and $\kappa=\kappa(A)$ such that for all $k\geq \kappa$ and $0\leq i<2^k$, we have

\begin{itemize}
\item $a_{2^k+i}=a_{2^k}+a_i$,
\item $a_{2^k}=2a_{2^k-1}-\lambda+1$.
\end{itemize}

The constant $\lambda$ is referred to as the \emph{character}; it is proven in \cite{R} that $\lambda\ge 0$ for all independent Stanley sequences. If $\kappa$ is taken as small as possible, then $a_{2^\kappa}$ is called the \emph{repeat factor}; informally, it is the point at which the sequence begins its repetitive behavior. It is proven in \cite{RV} for any sufficiently large integer $\rho$, there is an independent Stanley sequence with repeat factor $\rho$.
\end{definition}

\begin{example}
The sequence $S(0,1,7)$ is independent, with character $\lambda=7$ and repeat factor $a_4=10$.
$$
S(0,1,7)=0,1,7,8,10,11,17,18,30,31,37,38,40,41,47,48,90,\ldots
$$
Notice that the terms $a_4,a_5,a_6,a_7$ are simply the terms $a_0,a_1,a_2,a_3$ increased by 10. Likewise, terms $a_8$ through $a_{15}$ equal the terms $a_0$ through $a_7$ increased by 30; this illustrates the first condition of an independent sequence.  Furthermore, the sequence approximately doubles when the index is a power of 2: between $a_3=8$ and $a_4=10$, between $a_7=18$ and $a_8=30$, and between $a_{15}=48$ and $a_{16}=90$. These jumps become increasingly evident as the index increases, since the character $\lambda$ represents a correction; for instance, $a_{16}=2\cdot a_{15}-\lambda+1$.
\end{example}

\begin{definition}
A Stanley sequence $S(A)=(a_n)$ is \emph{regular} if there exist constants $\lambda$, $\sigma$ and an independent Stanley sequence $(a_n')$, having character $\lambda$, such that, for large enough $k$ and $0\le i<2^k$,
\begin{itemize}
\item $a_{2^k-\sigma+i}=a_{2^k-\sigma}+a_i$,
\item $a_{2^k-\sigma}=2a_{2^k-\sigma-1}-\lambda+1$.
\end{itemize}

The sequence $(a_n')$ is called the \emph{core} of $S(A)$ and the constant $\sigma$ is the \emph{shift index}. We refer to $\lambda$ as the \emph{character} of $(a_n)$ as well as of $(a_n')$.
\end{definition}

\begin{example}
The sequence $S(0,1,4)$ is regular with core $(a'_n)=S(0)$, shift index $\sigma=0$, and character $\lambda=0$.
$$
S(0,1,4)=0,1,4,5,11,12,14,15,31,32,34,35,40,41,43,44,89,\ldots
$$
Notice that the terms $a_4,a_5,a_6,a_7$ equal the terms $a'_0,a'_1,a'_2,a'_3$ of $S(0)$, translated by 11.  Likewise, the terms $a_8$ through $a_{15}$ equal the terms $a'_0$ through $a'_7$ translated by 31. As with an independent sequence, the sequence $S(0,1,4)$ has jumps when the index is a power of 2, for instance from 44 to 89. However, for some regular sequences, the shift index is nonzero, which means that the jumps are shifted away from powers of 2.  For instance, by removing 11 from the sequence $S(0,1,4)$, while leaving all other terms unchanged, we obtain the sequence $S(0,1,4,5,12,14,15,31)$, which is also regular and has shift index 1. In a sense, the term 11 is ``unnecessary'' in the Stanley sequence.
\end{example}

In this paper, we connect the asymptotic behavior of \emph{Type 1} Stanley sequences to their properties under modular arithmetic. We show that every independent sequence is not simply 3-free, but is also 3-free modulo a certain integer. This idea allows us to generalize independent sequences to \emph{modular sequences}; our definition of \emph{pseudomodular sequences} generalizes regular sequences in the same way. We describe modular Stanley sequences with novel structure (Table \ref{table:notindep}), and in particular identify a beautiful class of Stanley sequences created by summing the subsets of other sequences. We call these sequences \emph{basic sequences}, since in some sense they generalize the notion of writing an integer in base 3.

Our results are organized as follows. In \S \ref{sec:mod}, we present modular Stanley sequences and describe their structure (Theorem \ref{thm:struc}) and asymptotic behavior. In \S \ref{sec:pseudomod}, we present pseudomodular sequences, which generalize the concept of regular sequences in the same way that modular sequences generalize the concept of independent sequences. Theorem \ref{thm:pseudomod} shows how to construct complicated pseudomodular Stanley sequences by translating parts of a modular sequence.

In \S \ref{sec:basic}, we present basic sequences and show how to construct them (Theorem \ref{thm:basic}), using our results for modular sequences.  We also show how to construct modular sequences from basic sequences (Theorem \ref{thm:scalebasic}). In \S \ref{sec:gaps}, we apply our previous results to the problem of constructing Stanley sequences with arbitrarily large gaps between consecutive terms (Corollary \ref{cor:liminf}), partially addressing a question of Erd\H{o}s et al \cite{ELRSS}.  In \S \ref{sec:pfree}, we show how our results generalize to $p$-free integer sequences. Finally, in \S \ref{sec:future}, we discuss possible future directions of research in this area, and pose several conjectures.

\section{Modular sequences}
\label{sec:mod}
In Rolnick's definition \cite{R} of independent Stanley sequences stated above in Definition \ref{def:indep}, the Stanley sequence is constructed by repeatedly translating a set of $2^k$ integers (the integers less than the repeat factor). As we shall see, the condition that this set contain $2^k$ integers is not essential and is merely due to the fact that this is the case for ``simple'' Stanley sequences satisfying \emph{Type 1 growth}. Essentially, the definition of a modular sequence removes this condition. In Table \ref{table:notindep}, we will show examples of modular sequences which are not independent (or regular). 

\begin{definition}
Let $A$ be a set of integers and $x$ be an integer. We say that $x$ is \emph{covered} by $A$ if there exist $y,z\in A$ such that $z<y$ and $2y-z=x$.

Suppose that $N$ is a positive integer with $A\subseteq \{0,\ldots,N-1\}$. Then, we say that $x$ is \emph{covered} by $A$ \emph{modulo $N$} if there exist $y,z\in A$ with $z<y$ such that $2y-z\equiv x\pmod N$.
\end{definition}

\begin{definition}
Fix a positive integer $N\ge 1$. Suppose there exists a set $A\subset\{0,\ldots,N-1\}$ containing $0$ such that $A$ is 3-free modulo $N$ and all $x\in\{0,\ldots,N-1\}\backslash A$ are covered by $A$ modulo $N$. Then, $A$ is said to be a \emph{modular set modulo $N$} and $S(A)$ is said to be a \emph{modular Stanley sequence modulo $N$}.
\end{definition}

\begin{proposition}
Suppose $A$ is a finite subset of $\N_0$ and suppose $S(A)$ is an independent Stanley sequence with repeat factor $\rho$. Then $S(A)$ is a modular Stanley sequence modulo $3^\ell\cdot \rho$ for some integer $\ell\ge 0$.
\end{proposition}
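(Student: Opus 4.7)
The plan is to show that the choice $\ell=0$ already suffices, i.e., $S(A)$ is modular modulo $\rho$ itself. Let $T=S(A)\cap\{0,1,\ldots,\rho-1\}$. By the independence recursion, $T=\{a_0,a_1,\ldots,a_{2^\kappa-1}\}$; since $a_0=0$ (forced by the $k=\kappa$, $i=0$ case of the recursion), the set $T$ contains $0$. The only structural fact from independence I will invoke is the $k=\kappa$ instance of the first recursion, which identifies the ``next block'' $\{a_{2^\kappa},\ldots,a_{2^{\kappa+1}-1}\}$ with the translate $T+\rho$, and in particular places $T+\rho$ inside $S(A)$.

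To verify that $T$ is $3$-free modulo $\rho$, I would suppose for contradiction that $z,y,w\in T$ with $z<y$ form a non-trivial $3$-AP modulo $\rho$, so $2y-z\equiv w\pmod{\rho}$. Since $0\le z<y<\rho$ and $0\le w<\rho$, the integer $2y-z-w$ lies in $(-\rho,2\rho)$ and is divisible by $\rho$, hence equals $0$ or $\rho$. If $2y-z=w$, then $(z,y,w)$ is a non-trivial integer $3$-AP already inside $T\subseteq S(A)$. If $2y-z=w+\rho$, then $w+\rho\in T+\rho\subseteq S(A)$, and $(z,y,w+\rho)$ is a non-trivial $3$-AP in $S(A)$ (non-triviality coming from $y<\rho\le w+\rho$). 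Either alternative contradicts the $3$-freeness of $S(A)$.

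For the covering condition, fix $x\in\{0,\ldots,\rho-1\}\setminus T$. Then $x\ge 1$ and $x\notin S(A)$, so the greedy rule defining $S(A)$ guarantees integers $z<y$ in $S(A)$ with $y<x$ and $x=2y-z$. Since $y<x<\rho$ forces $y,z\in T$, the element $x$ is covered by $T$ in the ordinary sense, and hence modulo $\rho$. Finally, $S(T)=S(A)$: once their initial $2^\kappa$ terms coincide the greedy rule produces the same continuation, since the $3$-freeness test at each subsequent step depends only on the previously placed elements. Thus $S(A)$ is the modular Stanley sequence generated by the modular set $T$ modulo $\rho$.

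The only genuinely delicate point, and the place where independence is used in an essential way, is the lifting step in the $3$-freeness check: a modular relation $2y-z\equiv w\pmod{\rho}$ need not give an integer AP, and the wrap-around case $2y-z=w+\rho$ can be ruled out only by exhibiting an honest $3$-AP elsewhere in $S(A)$ — which is exactly what the second block $T+\rho$ provides. Everything else is bookkeeping inside $[0,\rho)$, and no factor of $3^\ell$ with $\ell>0$ is required; the $3^\ell$ in the statement just records that analogous arguments work with the first $2^{\kappa+\ell}$ blocks, since the recursion forces $a_{2^{\kappa+\ell}}=3^\ell\rho$.
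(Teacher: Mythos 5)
Your covering argument has a genuine gap, and it is precisely the gap the paper's choice of large $\ell$ is designed to circumvent. You write that for $x\in\{0,\ldots,\rho-1\}\setminus T$ ``the greedy rule defining $S(A)$ guarantees integers $z<y$ in $S(A)$ with $y<x$ and $x=2y-z$.'' This is only valid for $x>\max(A)$: the greedy process starts from the given generating set $A$ and only skips an integer $x$ because it would close a $3$-AP when $x$ exceeds the largest element already placed. An integer $x<\max(A)$ with $x\notin A$ is simply absent from the initial data and need not be covered at all. Concretely, for $A=\{0,1,7\}$ (with $\rho=10$, $T=\{0,1,7,8\}$) the value $x=5$ lies in $\{0,\ldots,9\}\setminus T$ but is not covered by $S(A)$ in the integer sense: any witness would need $y<5$ and $y\in S(A)$, forcing $y\in\{0,1\}$, and neither works. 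The paper handles exactly this obstruction by introducing $\omega(A)$, the largest integer neither in $S(A)$ nor covered by it, and then choosing $\ell$ large enough that $N=3^\ell\rho$ satisfies $N-\max(A')>\omega(A)$; the small, potentially uncovered $x$ are then lifted to $x+N>\omega(A)$, which \emph{is} covered, and the witnesses for $x+N$ are pushed back down modulo $N$ via a careful case analysis on where $z$ and $y$ sit relative to $N$. Nothing in your argument replaces this step, and so the claim that $\ell=0$ always suffices is not established.

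There is a second, more technical gap in the $3$-freeness check. Having fixed $z<y$, you correctly conclude $2y-z-w\in\{0,\rho\}$ and dispose of both cases. But a nontrivial $3$-AP in $T$ modulo $\rho$ need not be expressible with its ``middle'' element playing the role of $y$ and $z<y$: one can have $y<z<w$ in $T$ with $z+w=2y+\rho$ (the middle element is the smallest of the three), and then there is no relabeling that puts it in the form you analyze. The paper sidesteps this by considering arbitrary triples $x,y,z\in A'$ with $2y-z\equiv x\pmod N$, without an order constraint, and splitting on the sign of $2y-z-x$; your restriction to $z<y$ loses a case. Both gaps would need to be repaired before any conclusion about $\ell=0$ could be drawn, and the first one in particular is the substantive reason the statement allows a factor of $3^\ell$.
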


\begin{proof}
Let $\omega(A)$ denote the largest integer that is neither in $S(A)$ nor is covered by $S(A)$. Choose $\ell$ such that $N:=3^\ell\cdot \rho$ satisfies $N-\max\left(S(A)\cap \{0,1,\ldots,N-1\}\right)>\omega(A)$ and $\max\left(S(A)\cap\{0,1,\ldots,N-1\}\right)>\omega(A)$. Let $A'=S(A)\cap\{0,1,\ldots,N-1\}$. We know that $S(A')=S(A)$. This is easy to see since $\max(A')>\omega(A)$ and $A'=S(A)\cap\{0,1,\ldots,N-1\}$.

We claim that every element $x\in\{0,1,\ldots,N-1\}\backslash A'$ is covered by $A'$ modulo $N$. Clearly if $x$ is covered by $A'$, then $x$ is covered by $A'$ modulo $N$. Note that $x>\omega(A)$ implies $x$ is covered by $A'$. Now suppose that $x\le\omega(A)$ and $x$ is not covered by $A'$. Then, $x+N\not\in S(A')$ and $x+N$ must be covered by $S(A')$. Hence, we have a 3-term arithmetic progression (3-AP) of the form $z<y<x+N$, with $z,y\in S(A')$.

First, suppose $z\ge N$. Because $S(A')$ is independent with repeat factor $\rho$ and $N=3^\ell\cdot \rho$, we must have $z=z'+N$ and $y=y'+N$ where $z',y'\in A'$. Then $x$ is covered by $z'<y'<x$, a contradiction. Now, suppose $z<N$ and $y\ge N$. Then
$$
x+N=2y-x\ge 2\cdot N-\max(A')>N+\omega(A)\ge x+N,
$$
again a contradiction. We conclude that $y,z<N$; therefore $y,z\in A'$ and $z<y$ cover $x$ modulo $N$.

It remains to show that $A'$ is 3-free modulo $N$. Suppose we have $x,y,z\in A'$ such that $2y-z\equiv x\pmod N$. Observe that $-N<2y-z<2N$. If $0\le 2y-z<N$, then $z,y,x$ would form a 3-AP in $A'$, a contradiction. If $2y-z<0$, then we have $x=2y-z+N$ and therefore $z,y+N,x+N$ form a 3-AP in $S(A')$, a contradiction. If $2y-z\ge N$, then $x=2y-z-N$, and therefore $z,y,x+N$ form a 3-AP in $S(A')$, again a contradiction.

We conclude that $S(A)$ is a modular Stanley sequence modulo $N=3^\ell\cdot \rho$.
\end{proof}

We now describe the structure of modular Stanley sequences, which generalizes a similar result of Rolnick and Venkataramana \cite[Proposition 2.2]{RV} for independent sequences. For sets $X,Y$ and constant $c$, we will use the notation $X+Y$ to refer to the set $\{x+y\mid x\in X,y\in Y\}$ and $c\cdot X$ to refer to $\{cx\mid x\in X\}$.

\begin{theorem}
\label{thm:struc} Suppose that $A$ is a modular set modulo $N$, with $N$ a positive integer.

(i) $S(A)=A+N\cdot S(0)$.

(ii) Pick $\alpha\in\N$ such that $\gcd(\alpha,N)=1$. Then 
$$
\alpha\cdot A + N\cdot S(0)
$$
is a modular Stanley sequence modulo $3^{\ell}\cdot N$ for some integer $\ell\ge 0$.
\end{theorem}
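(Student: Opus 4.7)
For part (i), the plan is to show that the candidate $T := A + N \cdot S(0)$ coincides with $S(A)$ by verifying three properties: (a) $T$ is 3-free; (b) $A \subseteq T$ (immediate, since $0 \in S(0)$); and (c) every integer $m > \max(A)$ with $m \notin T$ is covered by two smaller elements of $T$. These together with the greedy definition of the Stanley sequence force $T = S(A)$ by induction on position. For (a), if $t_1 < t_2 < t_3$ lie in $T$ and form a 3-AP, write $t_i = a_i + N b_i$ with $a_i \in A$, $b_i \in S(0)$; reducing modulo $N$ and invoking 3-freeness of $A$ modulo $N$ forces $a_1 = a_2 = a_3$, after which 3-freeness of $S(0)$ forces $b_1 = b_2 = b_3$, contradicting $t_1 < t_2 < t_3$.

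For (c), decompose $m = qN + r$ with $r \in [0, N)$. If $r \in A$, then $m \notin T$ forces $q \notin S(0)$, and the 3-AP covering $q$ in $S(0)$ lifts (shifted by $r$) to one covering $m$ in $T$. If $r \notin A$, modularity of $A$ yields $y, z \in A$ with $z < y$ and $2y - z \equiv r \pmod{N}$, so either $2y - z = r$ (Case A) or $2y - z = r + N$ (Case B). When $q \geq 1$, one builds witnesses $u = y + Nc$, $v = z + Nd$ for $c, d \in S(0)$ chosen so that $2c - d = q$ (Case A) or $q - 1$ (Case B), using the fact that every non-negative integer is either in $S(0)$ or covered by $S(0)$. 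The delicate subcase is $q = 0$ with $r > \max(A)$, where Case B would force $z = 2y - r - N \leq 2 \max(A) - \max(A) - N < 0$, contradicting $z \in A$; so one must be in Case A, and $r$ is directly covered within $A$.

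For part (ii), the plan exploits the decomposition $S(0) = \left(S(0) \cap [0, 3^\ell)\right) + 3^\ell \cdot S(0)$, valid because every element of $S(0)$ admits a unique splitting of its ternary digits into low and high halves. This yields
\[
\alpha \cdot A + N \cdot S(0) = B + 3^\ell N \cdot S(0), \quad \text{where } B := \alpha \cdot A + N \cdot \left(S(0) \cap [0, 3^\ell)\right).
\]
I would choose $\ell$ large enough that $2\alpha < 3^\ell$, so that $\max(B) < 3^\ell N$ and $B \subset [0, 3^\ell N)$. The hypothesis $\gcd(\alpha, N) = 1$ makes the representation $\alpha a + N s$ unique, giving a bijection between $A \times \left(S(0) \cap [0, 3^\ell)\right)$ and $B$. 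To verify $B$ is a modular set modulo $3^\ell N$, any 3-AP in $B$ modulo $3^\ell N$ reduces modulo $N$ to a 3-AP in $\alpha \cdot A$ and hence in $A$ (which is 3-free modulo $N$), pinning the $a$-coordinate, after which the $s$-coordinate gives a 3-AP in $S(0) \cap [0, 3^\ell)$ (which is 3-free modulo $3^\ell$ by the ternary-digit argument, since $s_1 + s_3 < 3^\ell$). For covering, any $w \in [0, 3^\ell N) \setminus B$ decomposes as $\alpha a + N s$ with either $a \notin A$ or $s \notin S(0) \cap [0, 3^\ell)$, and the corresponding modular cover of $a$ or $s$ lifts to a cover of $w$ by elements of $B$. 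Applying part (i) to $B$ then yields $S(B) = B + 3^\ell N \cdot S(0) = \alpha \cdot A + N \cdot S(0)$, as required.

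The main obstacle in both parts is the covering condition: in (i) the case analysis on $(q, r)$ must carefully track wrap-around (Case B) witnesses, including the delicate boundary case $q = 0$; in (ii) one must disentangle the contributions of the $\alpha \cdot A$ and $N \cdot S(0)$ components to a given $w$, which rests essentially on the coprimality $\gcd(\alpha, N) = 1$ so that the two coordinates can be recovered independently.
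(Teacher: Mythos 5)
Your proposal is correct, but it runs the logical derivation in the opposite direction from the paper. The paper treats $S := \alpha\cdot A + N\cdot S(0)$ directly: it shows $S$ is $3$-free by reducing any $3$-AP modulo $N$ to pin the $A$-coordinate (using $\gcd(\alpha,N)=1$) and then the $S(0)$-coordinate, and it shows that any $x\ge 3^\ell N$ with $x\notin S$ is covered by $S$ by first choosing a cover of $x_A$ in $A$ modulo $N$, then a cover of the resulting $x_0$ in $S(0)$. It declares this to establish that $A_\ell:=\alpha\cdot A+N\cdot S_\ell$ is a modular set modulo $3^\ell N$, and obtains part (i) as the special case $\alpha=1,\ \ell=0$. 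You instead prove (i) first, and then derive (ii) by showing that $B:=\alpha\cdot A + N\cdot\bigl(S(0)\cap[0,3^\ell)\bigr)$ satisfies the modular-set definition modulo $3^\ell N$, and then invoking part (i) applied to $B$ together with the digit-splitting identity $S(0)=\bigl(S(0)\cap[0,3^\ell)\bigr)+3^\ell\cdot S(0)$.

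A few observations on the tradeoffs. Your route is more meticulous about the modular arithmetic: you verify $3$-freeness modulo $3^\ell N$ and covering modulo $3^\ell N$ head-on (including the observation that $s_1+s_3<3^\ell$ eliminates wraparound in the $S(0)$-coordinate), whereas the paper's argument proves ordinary $3$-freeness of $S$ and ordinary covering of large $x$, leaving the passage to the modular conditions on $A_\ell$ implicit. Your proof of part (i) via the decomposition $m=qN+r$, with the Case A / Case B wraparound analysis and the boundary subcase $q=0$, $r>\max(A)$, is both correct and more finely detailed than anything in the paper, which gets part (i) only as a corollary of (ii). One small point worth stating explicitly if you write this up: you rely on the fact that $S(0)\cap[0,3^\ell)$ is itself a modular set modulo $3^\ell$; this is true and provable by the same ternary-digit construction ($t_i(x)=0\mapsto(0,0)$, $1\mapsto(1,1)$, $2\mapsto(1,0)$ has no carries or borrows), but it deserves a sentence rather than being absorbed into ``the ternary-digit argument.'' Both approaches hinge on the same two structural facts, namely the unique decomposition $\alpha a + Ns$ for $\gcd(\alpha,N)=1$ and the covering/3-freeness of $A$ modulo $N$ and of $S(0)$; the difference is purely in which of (i) and (ii) is taken as primary.
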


\begin{example}
Take $A=\{0,1,7,8 \}$, which is a modular set modulo $10$. We have $S(A)=A+10\cdot S(0)$. Picking $\alpha=9$, we may verify that $9\cdot A+10\cdot S(0)$ is a modular Stanley sequence and is in fact equal to 
$$
S(0,9,10,19,30,39,40,49,63,72,73,82,90,93,99).
$$
\end{example}

This theorem immediately implies that every modular sequence $S(A)$ follows Type 1 growth, because $S(0)$ follows Type 1 growth. Also observe that the theorem implies that if $S(A)$ is a modular Stanley sequence modulo $N$, then it is also a modular Stanley sequence modulo $3N$.

\begin{corollary}
\label{cor:modgrowth}
Every modular sequence follows Type 1 growth.
\end{corollary}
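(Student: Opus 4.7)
The plan is to leverage Theorem \ref{thm:struc}(i), which gives the exact identity $S(A) = A + N\cdot S(0)$, and thereby deduce Type 1 growth of $S(A)$ from that of $S(0)$. Writing $b_0 < b_1 < \cdots$ for the elements of $S(0)$ (the nonnegative integers with no $2$ in ternary), the values $b_{2^k} = 3^k$ and $b_{2^{k+1}-1} = (3^{k+1}-1)/2$ show that $S(0)$ realizes the Type 1 bounds with $\alpha = 1$, the extremes $\limsup b_n/n^{\log_2 3} = 1$ and $\liminf b_n/n^{\log_2 3} = 1/2$ being attained along the powers-of-two and the one-less-than-powers-of-two subsequences respectively.

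Now let $A \subseteq \{0,1,\ldots,N-1\}$ be a modular set with $m := |A|$, and enumerate $A = \{\alpha_0 < \alpha_1 < \cdots < \alpha_{m-1}\}$. The inclusion $A \subseteq \{0,\ldots,N-1\}$ implies that for each $k$ the ``block'' $A + Nb_k$ lies inside the half-open interval $[Nb_k, Nb_k + N)$, and since $b_{k+1} \ge b_k + 1$ these intervals are disjoint and correctly ordered as $k$ grows. Therefore, writing $n = qm + r$ with $0 \le r < m$, the increasing enumeration $a_0 < a_1 < \cdots$ of $S(A)$ satisfies the clean identity
$$
a_n = \alpha_r + Nb_q.
$$

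With this formula in hand the conclusion is a routine limit computation: since $\alpha_r < N$ is bounded while $q = n/m + O(1)$,
$$
\frac{a_n}{n^{\log_2 3}} = \frac{N}{m^{\log_2 3}}\cdot\frac{b_q}{q^{\log_2 3}}\,(1 + o(1)),
$$
and taking $\liminf$ and $\limsup$ yields
$$
\frac{N}{2m^{\log_2 3}} \;\le\; \liminf_n \frac{a_n}{n^{\log_2 3}} \;\le\; \limsup_n \frac{a_n}{n^{\log_2 3}} \;\le\; \frac{N}{m^{\log_2 3}}.
$$
Thus $S(A)$ follows Type 1 growth with $\alpha = N/m^{\log_2 3}$. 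There is no genuine obstacle here: the structural identity supplied by Theorem \ref{thm:struc}(i) reduces everything to the known asymptotics of $S(0)$, and the interleaving of the $m$ translates of $N\cdot S(0)$ is cost-free because the blocks $A + Nb_k$ never overlap.
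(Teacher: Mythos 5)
Your proof is correct and follows the same route as the paper: the paper states the corollary as an immediate consequence of Theorem~\ref{thm:struc}(i) together with the known Type~1 growth of $S(0)$, which is exactly what you do. You simply make the ``immediate'' implication explicit via the clean formula $a_n = \alpha_r + N b_q$ (valid because the translates $A + Nb_k$ sit in disjoint intervals) and the resulting limit computation, which is a welcome level of detail the paper leaves implicit.
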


\begin{proof}[Proof of Theorem \ref{thm:struc}]
We first show how part (i) follows from part (ii).  For $\ell\in \N$, let $S_\ell$ denote the first $2^\ell$ elements of $S(0)$, with maximum element $(3^\ell-1)/2$, and let $A_\ell:=\alpha\cdot A+N\cdot S_\ell$. Pick $\ell$ large enough that $\max(A_\ell)\le 3^\ell\cdot N$. We will prove part (ii) for this choice of $\ell$. Taking $\alpha=1$ and $\ell=0$ yields part (i).

We now prove part (ii); that is, $A_{\ell}$ is a modular set modulo $3^\ell \cdot N$. Let $S=\alpha\cdot A + N\cdot S(0)$. First, we prove that $S$ is 3-free. Suppose towards contradiction that there exists a 3-AP $z,y,x$ in $S$. Write $x=\alpha x_A+Nx_0$, for $x_A\in A$, $x_0\in S(0)$, and define $y_A,z_A,y_0,z_0$ similarly. Then, $z_A,y_A,x_A$ form a 3-AP modulo $N$, because $\gcd(\alpha,N)=1$. Since $A$ is 3-free modulo $N$, we conclude that $x_A=y_A=z_A$. Therefore, $z_0,y_0,x_0$ must form a 3-AP, a contradiction since $S(0)$ is 3-free.

Now pick $x\not\in S$ such that $x\ge 3^\ell\cdot N$. We must show that $x$ is covered by $S$. Let $x_A$ be the unique element of $\{0,\ldots,N-1\}$ such that $x\equiv \alpha x_A\pmod{N}$. (Note that $x_A$ only exists because $\gcd(\alpha,N)=1$.)  Define $y_A,z_A,y_0,z_0$ as follows.

If $x_A\in A$, then set $y_A=z_A=x_A$. Else, since $A$ is a modular set modulo $N$, there exist $z_A,y_A\in A$ such that $z_A,y_A,x_A$ form a 3-AP. Now, define $x_0$ by $x=\alpha(2y_A-z_A)+Nx_0$. Note that the integer $x_0$ is nonnegative, because $x>\max(\alpha\cdot A)$.

Now, if $x_0\in S(0)$, set $y_0=z_0=x_0$. Else, since $S(0)$ is a Stanley sequence, there exist $z_0,y_0\in A$ such that $z_0,y_0,x_0$ form a 3-AP. Setting $y=\alpha y_A+Ny_0$ and $z=\alpha z_A+Nz_0$, we see that $y,z\in S$ and $z,y,x$ form a 3-AP, completing our proof.
\end{proof}

In \cite[Theorem 1.3]{R}, Rolnick introduces an operator $\otimes$, called the \emph{product}, that combines regular Stanley sequences to produce another regular Stanley sequence. We translate this definition of $\otimes$ to modular sequences here. The proof is similar to the proofs of the preceding theorem.

\begin{proposition}
Suppose that $A$ and $B$ are modular sets, modulo $M$ and $N$ respectively. Let $A\otimes B=A+M\cdot B$. Then $S(A\otimes B)$ is a modular Stanley sequence modulo $MN$.
\end{proposition}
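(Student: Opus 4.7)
The plan is to verify directly from the definitions that $C := A \otimes B = A + M \cdot B$ is itself a modular set modulo $MN$; the proposition then follows from the definition of a modular Stanley sequence. Every integer in $\{0, 1, \ldots, MN-1\}$ admits a unique expression $x = a + Mb$ with $a \in \{0, \ldots, M-1\}$ and $b \in \{0, \ldots, N-1\}$, and $x \in C$ exactly when $a \in A$ and $b \in B$. In particular, $C \subseteq \{0, \ldots, MN-1\}$ and $0 \in C$, since $0$ lies in both $A$ and $B$.

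For 3-freeness modulo $MN$, I would take $x, y, z \in C$ with $2y - z \equiv x \pmod{MN}$, write each in its base-$M$ form as $a_\bullet + Mb_\bullet$, and reduce modulo $M$ to obtain $2a_y - a_z \equiv a_x \pmod M$. Since $A$ is 3-free modulo $M$, this forces $a_x = a_y = a_z$. Substituting back and cancelling the common factor of $M$ yields $2b_y - b_z \equiv b_x \pmod N$, and 3-freeness of $B$ modulo $N$ analogously forces $b_x = b_y = b_z$, so $x = y = z$.

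For the covering condition, I would fix $x = a + Mb \in \{0, \ldots, MN-1\} \setminus C$, so either $a \notin A$, or $a \in A$ and $b \notin B$. In the latter case, set $a_y = a_z = a$ and use modularity of $B$ to produce $b_z < b_y$ in $B$ with $2b_y - b_z \equiv b \pmod N$; then $y := a + Mb_y$ and $z := a + Mb_z$ satisfy $z < y$ and $2y - z \equiv x \pmod{MN}$. In the former case, modularity of $A$ yields $a_z < a_y$ in $A$ with $2a_y - a_z = a + Mk$ for some $k \in \{-1, 0, 1\}$ (forced by the ranges of $a, a_y, a_z$). Letting $b'$ be the residue of $b - k$ modulo $N$, I would choose $b_y, b_z \in B$ with $b_z \le b_y$ and $2b_y - b_z \equiv b' \pmod N$, either taking $b_y = b_z = b'$ when $b' \in B$, or invoking modularity of $B$ otherwise. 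Setting $y := a_y + Mb_y$ and $z := a_z + Mb_z$ then gives the required cover.

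The main bookkeeping obstacle is verifying the strict ordering $z < y$: when $b_z < b_y$, the $b$-part dominates and forces $Mb_y \ge Mb_z + M > a_z + Mb_z$; when $b_z = b_y$, one relies on $a_z < a_y$ inherited from modularity of $A$. The other delicate point is the ``carry'' $k$, which shifts the target residue in $B$ from $b$ to $b - k$, but $b - k \pmod N$ is still a valid element of $\{0, \ldots, N-1\}$ that $B$ either contains or covers. Up to these details, the argument is a direct adaptation of the proof of Theorem \ref{thm:struc}(ii).
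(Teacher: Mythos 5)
Your proof is correct and follows the approach the paper intends: the paper omits the proof of this proposition, saying only that it is ``similar to the proofs of the preceding theorem,'' and your argument is exactly the natural adaptation of the decomposition argument from Theorem~\ref{thm:struc}(ii) to $C = A + M\cdot B$, including the carry and ordering checks needed to make it rigorous. The only tiny inaccuracy is that the carry $k$ actually lies in $\{0,1\}$ rather than $\{-1,0,1\}$ (since $a_z < a_y$ gives $2a_y - a_z \ge 1$, while $a \le M-1$), but this does not affect the argument.
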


\begin{example}
Set $A=\{0,1,7,8\}$, which is a modular set modulo $10$, and set $B=\{0,2\}$, which is a modular set modulo $3$. The set
$$
A\otimes B=\left\{0,1,7,8,20,21,27,28 \right\}
$$
is a modular set modulo $30$.
\end{example}

Note that $\otimes$ is an associative operation on modular sequences.  Namely, for $A,B,C$ modular sets, modulo $M,N,P$, the set $(A\otimes B)\otimes C=A\otimes (B\otimes C)$ is modular with modulus $MNP$. Thus, the set of modular sets is a noncommutative monoid under the operation $\otimes$ and has identity $\{0\}$.


\section{Pseudomodular sequences}
\label{sec:pseudomod}

\begin{table}[!ht]
$$\begin{array}{|c|c|c|}
\hline
A & N & |A| \\
\hline\hline
0, 6, 13, 14, 16, 17, 27, 29, 30, 35, 36, 49, 50 & 61 & 13 \\
\hline
0, 1, 3, 4, 9, 12, 26, 29, 34, 37, 50, 53, 60, 61  & 75 & 14\\
\hline
 0, 5, 7, 8, 12, 13, 15, 20, 29, 36, 44, 55, 62, 63, 70  & 79 & 15\\
\hline
 0, 7, 12, 16, 18, 23, 35, 41, 42, 53, 57, 62, 69, 74, 78, 80, 92 & 93 & 17\\
\hline
0, 11, 12, 16, 18, 19, 40, 46, 48, 57, 59, 65, 86, 87, 89, 93, 94, 105, 110, 120, 122 & 125 & 21\\
\hline
0, 11, 12, 23, 45, 56, 57, 68, 84, 95, 110, 116, 121, 127, 142, 153, 194,  & 673 & 63 \\
207,213,218,224,226, 237, 265, 271, 276, 282, 297, 303, 308, 314, 355,  & & \\
356, 366, 374, 375,385, 386, 427, 433, 438,444, 459, 465, 470, 476, 504, & & \\
515,517,523,528,534, 536, 547,588, 599, 614, 620, 625, 631, 646, 657& &\\
\hline
\end{array}$$
\caption{Modular sequences $S(A)$ that are not independent, with their modulus $N$ shown and the cardinality of the modular set $A$. Several of these examples were found by modifying sequences on a website of Wroblewski \cite{W}.}
\label{table:notindep}
\end{table}

Rolnick conjectured that all Stanley sequences following Type 1 growth are regular. The modular sequences shown in Table \ref{table:notindep} disprove this conjecture, since they are not independent (or regular). However, only a slight modification is necessary in Rolnick's original definition of independent and regular sequences to resurrect the conjecture: The period of such a sequence need not in fact be a power of 2. We restate the definition of a modular sequence in these terms.

\begin{definition}
A Stanley sequence $S(A)=(a_n)$ is \emph{modular} if there exist constants $\lambda,m$ such that for all $k\geq 0$ and $0\leq i<m\cdot 2^k$,
\begin{itemize}
\item $a_{m\cdot 2^k+i}=a_{m\cdot 2^k}+a_i$,
\item $a_{m\cdot 2^k}=2a_{m\cdot 2^k-1}-\lambda+1$.
\end{itemize}

We refer to $\lambda$ as the \emph{character} and $a_m$ as the \emph{repeat factor}, where $m$ is assumed to be as small as possible so that the above conditions are satisfied.
\end{definition}

Just as the definition of a modular sequence is a slight modification of the definition of an independent sequence, so we can modify the definition of a regular sequence to reflect that the period need not be a power of 2.

\begin{definition}
A Stanley sequence $S(A)=(a_n)$ is \emph{pseudomodular} if there exist constants $\lambda, m$ and a modular Stanley sequence $(a_n')$, having character $\lambda$, such that, for all $k\ge 0$ and $0\le i<2^k$,

\begin{itemize}
\item $a_{m\cdot 2^k-\sigma+i}=a_{m\cdot 2^k-\sigma}+a_i$,
\item $a_{m\cdot 2^k-\sigma}=2a_{m\cdot 2^k-\sigma-1}-\lambda+1$.
\end{itemize}

We refer to the sequence $(a_n')$ as the \emph{core} of $S(A)$ and the constant $\sigma$ as the \emph{shift index}.
\end{definition}

It is easy to see that every regular Stanley sequence (according to Rolnick's definition) is pseudomodular, with $m$ taken to be a power of 2. The statement and proof of Theorem 1.5 in \cite{R} generalize naturally from regular sequences to pseudomodular sequences.

\begin{theorem}
\label{thm:pseudomod}
Let $S(A)=(a_n)$ be a modular Stanley sequence with character $\lambda$, shift index $\sigma$, and repeat factor $a_m$. Pick $k>0$ and $c$ such that $$\lambda \le c \le a_{m\cdot (2^k-1)}-\lambda.$$
Define
$$A_k^c:=\left\{a_i\mid 0\le i<m\cdot 2^k-\sigma\right\}
\cup\left\{c+a_i\mid m\cdot 2^k-\sigma\le i<m\cdot 2^{k+1}-\sigma\right\}.$$
Then, $A_k^c$ is 3-free and $S(A_k^c)$ is a pseudomodular Stanley sequence with core $S(A_k)$.
\end{theorem}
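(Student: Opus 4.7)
The plan is to mirror Rolnick's argument for the analogous Theorem 1.5 in \cite{R}, replacing independent sequences with modular ones throughout. The set $A_k^c$ consists of the first $m\cdot 2^k-\sigma$ elements of $S(A)$ unchanged, concatenated with the next $m\cdot 2^k$ elements each translated by $c$. Thus $|A_k^c|=m\cdot 2^{k+1}-\sigma$, which is exactly the index at which the pseudomodular ``doubling jump'' is supposed to occur at level $k+1$.

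First I would establish that $A_k^c$ is $3$-free. Since $S(A)$ is $3$-free and translation preserves this property, the unshifted and shifted halves are each internally $3$-free. Because $c\ge\lambda\ge 0$, every shifted element strictly exceeds every unshifted element, so any offending 3-AP $x<y<z$ must have $x$ in the unshifted half. Writing out the two remaining cases (both $y$ and $z$ shifted, or only $z$ shifted) one obtains forced values of $c$ of the shape $c=2a_p-a_q-a_l$ or $c=2a_p-a_l-a_q$ for specific indices $l,p,q$. In each case I would use the modular structure of $S(A)$, in particular the decomposition $S(A)=A+N\cdot S(0)$ from Theorem \ref{thm:struc}(i) together with the doubling relation $a_{m\cdot 2^j}=2a_{m\cdot 2^j-1}-\lambda+1$, to show that the required $c$ necessarily falls outside the permitted window $[\lambda,\,a_{m(2^k-1)}-\lambda]$.

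Next I would prove by induction on the level $j\ge k+1$ that $S(A_k^c)$ matches the predicted pseudomodular pattern with core $S(A_k)$. The base case is the first element the greedy algorithm appends to $A_k^c$: I would verify that every integer strictly between $\max(A_k^c)=c+a_{m\cdot 2^{k+1}-\sigma-1}$ and the predicted jump value $2\max(A_k^c)-\lambda+1$ is already covered by a 3-AP inside $A_k^c$, forcing the greedy algorithm to produce exactly the desired value. The inductive step then follows from the modularity of the core: once the sequence has entered its ``repeat'' regime, each subsequent block is forced to be a translate of an earlier block by the same analysis that established the structure of $S(A)$ in \S\ref{sec:mod}, and the two pseudomodular identities fall out immediately.

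The main obstacle is the coverage analysis near the first jump. We need every integer in the gap to admit a 3-AP representation using elements of $A_k^c$, and this is delicate because the shifted block has been translated by an arbitrary $c$ in the permitted range. The upper bound $c\le a_{m(2^k-1)}-\lambda$ is exactly the threshold needed so that the translated block is still close enough to the unshifted part for the modular covering arguments of \S\ref{sec:mod} to apply, while the lower bound $c\ge\lambda$ prevents any spurious 3-APs from being created during the covering step. This is the direct analogue of the coverage estimate at the heart of Rolnick's original proof, and is where the bulk of the technical work would go.
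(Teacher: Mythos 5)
The paper does not actually give a proof of this theorem; it only remarks that the statement and proof of Theorem 1.5 of \cite{R} ``generalize naturally'' from regular to pseudomodular sequences, which is exactly the plan you outline. Your sketch (case analysis on which endpoints of a putative 3-AP lie in the shifted block to establish 3-freeness, then an inductive coverage argument around the doubling jump, with the bounds $\lambda\le c\le a_{m(2^k-1)}-\lambda$ doing the real work) is the same approach the paper points to, and in fact supplies more detail than the paper does; the only slip is a sign in your displayed expressions for the forced value of $c$, which should read $c=a_l+a_q-2a_p$ rather than $c=2a_p-a_q-a_l$ in the case where $y,z$ are both shifted.
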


From Corollary \ref{cor:modgrowth}, it is simple to show that all pseudomodular sequences follow Type 1 growth. We revise Rolnick's conjecture as follows:

\begin{conjecture}
A Stanley sequence follows Type 1 growth if and only if it is pseudomodular.
\end{conjecture}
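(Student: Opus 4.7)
The plan is to split the biconditional into its two directions. The direction \emph{pseudomodular} $\Rightarrow$ \emph{Type 1} is the easier one, and is essentially indicated by the authors. Combining the two defining equations of a modular sequence, one computes $a_{m\cdot 2^{k+1}} = 2a_{m\cdot 2^{k+1}-1} - \lambda + 1 = 2a_{m\cdot 2^k} + 2a_{m\cdot 2^k - 1} - \lambda + 1 = 3 a_{m\cdot 2^k}$, so $a_{m\cdot 2^k} = 3^k \cdot a_m$ and therefore $a_n = \Theta(n^{\log_2 3})$ along the doubling subsequence; the block-doubling structure forces the same asymptotic for all $n$. The pseudomodular case follows, since after the finitely many ``out-of-phase'' initial terms the sequence agrees with a shift of its modular core, which itself satisfies Type 1 growth.

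For the converse, Type 1 $\Rightarrow$ pseudomodular, my plan is to extract a candidate modulus $N$ directly from the sequence. Let $\omega(A)$ denote the largest integer neither in $S(A)$ nor covered by $S(A)$. The key preliminary claim is that $\omega(A) < \infty$ for every Type 1 Stanley sequence. Granting this, I would select an index $m_0$ large enough that $N := a_{m_0}$ exceeds $\omega(A)$ and also satisfies $a_{m_0} = 2a_{m_0-1} - \lambda + 1$ for the candidate $\lambda := 2a_{m_0-1} - a_{m_0} + 1$. Setting $A' := S(A) \cap \{0,\ldots,N-1\}$, an adaptation of the argument in the proposition of \S\ref{sec:mod} shows that $A'$ is 3-free modulo $N$ and covers $\{0,\ldots,N-1\}$ modulo $N$, hence $A'$ is a modular set, and by Theorem \ref{thm:struc}(i) the sequence $S(A') = A' + N \cdot S(0)$ is modular. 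Comparing $S(A)$ with $S(A')$, either the two coincide (giving modularity of $S(A)$) or they agree only past some initial segment, in which case $\sigma$ can be chosen to realign the doubling points and exhibit $S(A)$ as pseudomodular with core $S(A')$.

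The main obstacle is proving finiteness of $\omega(A)$ from Type 1 growth alone. Heuristically, if infinitely many integers remained both excluded from and uncovered by $S(A)$, each would eventually be inserted by greediness, producing an excess of terms incompatible with the upper bound $\limsup a_n / n^{\log_2 3} \le \alpha$. Making this rigorous is delicate: the ``covered'' relation is a global condition on the sequence, and small perturbations can cascade. A promising strategy is contradiction: assume uncovered gaps $x_1 < x_2 < \cdots$ persist, track how the forced inclusion of each $x_i$ perturbs the would-be doubling recursion, and produce a quantitative mismatch between the resulting growth rate and the asymptotic bound of Conjecture \ref{conj:main}. Alternatively, one might read off $N$ from an asymptotic invariant, such as the rational leading constant $\alpha$ (whose denominator is a power of $3$ in the independent case, by \cite{RV}), and then prove after the fact that the extracted $N$ controls the combinatorics of $S(A)$. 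Either route appears to require genuinely new techniques, since Type 1 growth is a weak asymptotic hypothesis while pseudomodularity is a rigid combinatorial conclusion.
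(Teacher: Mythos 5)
This statement is a \emph{conjecture} in the paper, not a proven theorem; the paper gives no proof of it, and indeed offers it as a revision of an earlier conjecture of Rolnick that the present paper's examples refute. So the honest answer to ``compare against the paper's own proof'' is that there is none to compare against.

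That said, let me assess what you wrote on its own terms. Your forward direction (pseudomodular $\Rightarrow$ Type~1) is essentially sound and coincides with what the paper asserts follows ``simply'' from Corollary~\ref{cor:modgrowth}: your computation $a_{m\cdot 2^{k+1}} = 3\,a_{m\cdot 2^k}$ is correct and yields $a_{m\cdot 2^k}=3^k a_m$, and the block-translation structure then propagates the $\Theta(n^{\log_2 3})$ asymptotic to all indices. In fact this is even cleaner than citing Corollary~\ref{cor:modgrowth}, which rests on Theorem~\ref{thm:struc}(i). The converse (Type~1 $\Rightarrow$ pseudomodular) is exactly the open content of the conjecture, and your own writeup correctly identifies that it is not established. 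I would add that even granting finiteness of $\omega(A)$, your plan has a further gap: in the Proposition of \S\ref{sec:mod}, the verification that $A'=S(A)\cap\{0,\ldots,N-1\}$ is $3$-free modulo $N$ and covers modulo $N$ leans heavily on the \emph{independence} hypothesis (the periodicity $z=z'+N$, $y=y'+N$ used in the case $z\ge N$), which you would not have available. Your candidate $\lambda := 2a_{m_0-1}-a_{m_0}+1$ is also circular as stated---it makes the single equation at index $m_0$ hold by fiat, but gives no reason the same $\lambda$ should govern all the later doubling steps, which is what the definition requires. So the attempt is a reasonable reconnaissance of the problem, but it does not and cannot constitute a proof of what the paper itself leaves open.
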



\section{Basic sequences}
\label{sec:basic}

We have noted that the sequence $S(0)$ consists of the sums of subsets of $(1,3,9,27,\ldots)$. We now introduce a class of modular Stanley sequences which generalize this behavior to sequences other than the powers of 3. 

\begin{definition}
We say that a Stanley sequence $S(A)=(a_n)$ is \emph{basic} if there exists a sequence $B=(b_k)$ and constant $\alpha\in \N$, such that (i) $b_k=\alpha\cdot 3^k$ for $k$ sufficiently large, and (ii) the elements $a_n$ correspond to sums of subsets of $B$, that is, we have
$$
S(A)=\left\{\sum \delta_k b_k\mid \delta_k\in \{0,1\}\text{ with $\delta_k\ne 0$ for finitely many $k$}\right\},
$$
where these sums are all distinct. In this case, we say that $B$ is the \emph{basis} of $S(A)$.
\end{definition}

In a sense, the basis $B$ of an basic Stanley sequence may be seen as a generalization of the powers of 3. Instead of writing the elements of the sequence in base $3$, we are able to write them simply in ``base $B$".  Just as $S(0)$ consists of those integers with only digits $0,1$ in base $3$, a basic sequence consists of those integers with only digits $0,1$ in base $B$.   Thus $S(0)$ is basic with basis $\{1,3,9,\ldots\}$.

\begin{example}
The sequence $S(0,1,7)$ is basic with basis $\{1,7,10,30,\ldots\}$. We may rewrite the first few terms of this sequence with reference to the basis, as follows:
\begin{align*}
S(0,1,7)&=0, 1, 7, 8, 10, 11, 17, 18, 30, 31, 37, 38,\ldots\\
&=0, 1, 7, 7+1, 10, 10+1, 10+7, 10+7+1, 30, 30+1, 30+7, 30+7+1,\ldots\\
\end{align*}
\end{example}

\begin{proposition}
Every basic Stanley sequence is independent (and thus modular).
\end{proposition}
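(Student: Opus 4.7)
The plan is to verify that a basic Stanley sequence $S(A)=(a_n)$ with basis $B=(b_k)$ satisfies both conditions of Definition \ref{def:indep} for all $k$ sufficiently large. Let $K_0$ be a threshold such that $b_k = \alpha \cdot 3^k$ for all $k \geq K_0$, and set $C_0 := \sum_{j<K_0} b_j$.

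The first step would establish that the basis is super-increasing in its tail. For $k \geq K_0$ one has $\sum_{j<k} b_j = C_0 + \alpha(3^k - 3^{K_0})/2$, which is asymptotic to $b_k/2$; hence there exists $\kappa \geq K_0$ such that $b_k > \sum_{j<k} b_j$ for all $k \geq \kappa$.

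Next I would combine this super-increasing property with the distinctness hypothesis on the subset sums to match each $a_n$ with the subset sum indexed by the binary expansion of $n$. Fix $k \geq \kappa$. Every subset sum involving some $b_j$ with $j \geq k$ is at least $b_k$, which strictly exceeds every subset sum using only $b_0, \dots, b_{k-1}$. By distinctness, the latter form a set of exactly $2^k$ values and so must equal $\{a_0, a_1, \dots, a_{2^k-1}\}$; in particular $a_{2^k} = b_k$. The same argument, applied to subset sums involving $b_k$ but no $b_j$ with $j > k$ (all strictly less than $b_{k+1}$, since $k+1 \geq \kappa$), yields $a_{2^k+i} = b_k + a_i = a_{2^k} + a_i$ for $0 \leq i < 2^k$. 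This is the first condition of independence.

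For the second condition, using $a_{2^k-1} = \sum_{j<k} b_j$ and $a_{2^k} = b_k$ together with $b_k = \alpha \cdot 3^k$, one computes
$$2a_{2^k-1} - a_{2^k} + 1 = 2\sum_{j<k} b_j - b_k + 1 = 2C_0 - \alpha \cdot 3^{K_0} + 1,$$
which is a constant independent of $k$. Setting $\lambda$ equal to this value yields the identity $a_{2^k} = 2a_{2^k-1} - \lambda + 1$, completing the verification of independence. Modularity then follows from the earlier proposition that every independent Stanley sequence is modular. The main technical point is the bijection with binary expansions: the distinctness of subset sums is essential to rule out any collapse that would disturb this indexing, and the remaining work is routine bookkeeping for super-increasing sequences.
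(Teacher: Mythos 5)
Your proof is correct and follows the same strategy as the paper: establish the super-increasing tail of the basis, then match the first $2^k$ terms of $S(A)$ with subset sums of $\{b_0,\dots,b_{k-1}\}$ by a counting argument. You actually go slightly further than the paper's proof, which stops after verifying the translation property $a_{2^k+i}=a_{2^k}+a_i$; your explicit computation that $2a_{2^k-1}-a_{2^k}+1 = 2C_0 - \alpha\cdot 3^{K_0}+1$ is constant confirms the second condition of independence and pins down the character $\lambda$, a step the paper leaves implicit.
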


\begin{proof}
Suppose that $S(A)$ is basic with basis $B$. For $k$ large enough, $b_k=\alpha\cdot 3^k$; therefore, for some $\kappa$ and all $k\ge \kappa$, we have $$b_k\ge \sum_{i=0}^{k-1} b_i.$$Since the sum of each subset of $\{b_0,b_1,\ldots,b_{k-1}\}$ is equal to a distinct element $a_n$ of $S(A)$, and there are $2^k$ such subsets, we conclude that $a_{2^k}=b_k=\alpha\cdot 3^k$ and $a_{2^k+i}=a_{2^k}+a_i$ for each $0\le i<2^k$. We conclude that $S(A)$ is independent, as desired.
\end{proof}

Using our formulation of modular Stanley sequences, we prove the following theorem.

\begin{theorem}
\label{thm:basic}
The sequence $B=(b_k)$ is a valid basis, provided that (i) $3^k$ is the largest power of $3$ dividing $b_k$ for each $k$, and (ii) $b_k=3^k$ for $k$ large enough. Note that $(b_k)$ need not be an increasing sequence for the first few values of $k$.
\end{theorem}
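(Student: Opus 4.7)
The plan is to reduce Theorem~\ref{thm:basic} to Theorem~\ref{thm:struc}(i) by producing a finite modular set that generates the proposed basic sequence. Let $K$ be an index beyond which $b_k=3^k$. Choose $L\geq K$ large enough that $\sum_{k=0}^{L-1} b_k < 3^L$; this is possible since $\sum_{k=K}^{L-1} b_k = (3^L-3^K)/2$ while $\sum_{k<K} b_k$ is a fixed constant. Define the truncation
$$A_L := \Bigl\{\sum_{k=0}^{L-1} \delta_k b_k : \delta_k \in \{0,1\}\Bigr\} \subset \{0,1,\ldots,3^L-1\}.$$
If I can show $A_L$ is a modular set modulo $3^L$, then Theorem~\ref{thm:struc}(i) yields $S(A_L)=A_L + 3^L\cdot S(0)$. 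Using $b_k=3^k$ for $k\geq L$, we have $3^L\cdot S(0) = \{\sum_{k\geq L}\delta_k b_k\}$, and hence $S(A_L) = \{\sum_k \delta_k b_k\}$ is exactly the proposed basic sequence with basis $B$.

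Three facts drive the argument, all hinging on condition~(i). First, the sums $\sum \delta_k b_k$ are pairwise distinct: if two representations disagreed, the least index $k_0$ of disagreement would force the difference to have $3$-adic valuation exactly $k_0$, since $3^{k_0}\|b_{k_0}$ while all higher terms are divisible by $3^{k_0+1}$. Second, $A_L$ is $3$-free modulo $3^L$: writing any apparent progression $2y-z\equiv x\pmod{3^L}$ in terms of digits, the coefficient vector $e_k := 2\delta^y_k - \delta^z_k - \delta^x_k \in \{-2,\ldots,2\}$ either vanishes identically (forcing $x=y=z$ by a coordinate-wise check with digits in $\{0,1\}$) or else the same valuation principle produces a summand with $3$-adic valuation $k_0<L$, contradicting divisibility by $3^L$.

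The third and most delicate step is covering: every $x\in\{0,\ldots,3^L-1\}\setminus A_L$ must be covered by $A_L$ modulo $3^L$. Here the key observation is that condition~(i) makes the ``base-$B$ digit'' map $\phi:\{0,1,2\}^L \to \mathbb{Z}/3^L\mathbb{Z}$, $(c_k)\mapsto\sum c_k b_k$, injective---by exactly the same $3$-adic valuation trick---and hence bijective by cardinality. Given $x\notin A_L$, its unique preimage $(c_0,\ldots,c_{L-1})$ must contain some $c_k=2$ (otherwise $\sum c_k b_k < 3^L$ would force $x\in A_L$). Exploiting the identity $2 = 2\cdot 1 - 0$ at the digit level, I set $(y_k,z_k) = (c_k,c_k)$ when $c_k\in\{0,1\}$ and $(y_k,z_k)=(1,0)$ when $c_k=2$; then $y:=\sum y_k b_k$ and $z:=\sum z_k b_k$ lie in $A_L$ with $z<y$ (strict because at least one $c_k=2$), and $2y-z = \sum c_k b_k \equiv x\pmod{3^L}$.

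The main obstacle is this covering step: recognizing that condition~(i) is exactly what promotes $(b_0,\ldots,b_{L-1})$ into a mixed-radix digit system for $\mathbb{Z}/3^L\mathbb{Z}$, so that the digit-level decomposition $2 = 2\cdot 1 - 0$ lifts to a genuine covering progression inside $A_L$. Everything else in the argument---distinctness, $3$-freeness modulo $3^L$, and containment in $\{0,\ldots,3^L-1\}$---is essentially a single repeated application of the $3$-adic valuation principle.
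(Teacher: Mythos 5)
Your proof is correct and follows the same overall strategy as the paper: reduce the theorem to exhibiting a finite truncation $A_L$ of the subset-sum set as a modular set modulo a power of $3$, then invoke Theorem~\ref{thm:struc}(i). Where you differ is in how you establish that $A_L$ is modular. The paper works directly with base-$3$ digits $t_i(\cdot)$ and carries out a digit-by-digit induction for $3$-freeness, followed by a stepwise digit-fixing procedure for covering (constructing $y^{(j)},z^{(j)}$ one index at a time so that $2y^{(j)}-z^{(j)}$ agrees with $x$ in the digits $t_0,\ldots,t_j$). You instead recognize that condition (i) makes $(b_0,\ldots,b_{L-1})$ a mixed-radix digit system: the map $\phi:\{0,1,2\}^L\to\mathbb{Z}/3^L\mathbb{Z}$ is injective by the $3$-adic valuation of the least index of disagreement, hence bijective by counting. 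From that single observation, $3$-freeness falls out by examining the coefficient vector $e_k=2\delta^y_k-\delta^z_k-\delta^x_k\in\{-2,\ldots,2\}$, and covering falls out by lifting the digit identity $2=2\cdot 1-0$ coordinate-wise on the unique preimage of $x$. Your packaging is cleaner: the paper's opening claim that, for each $i$, the digits $\{t_i(x),t_i(y),t_i(z)\}$ are either all equal or $\{0,1,2\}$ is literally false for arbitrary triples (because of carries) and only becomes correct once the lower digits are known to agree; the inductive argument the paper actually runs is fine, but your valuation-based phrasing avoids the issue entirely. Both approaches buy the same theorem; yours isolates the structural fact (a bijective digit map) that makes both halves of the modular-set verification immediate.

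Two small points worth making explicit for rigor, both of which your argument already implicitly handles: you should note that $b_k>0$ for all $k$ (immediate since $3^k\,\|\,b_k$ forces $b_k\geq 3^k$ and the basis elements are taken to be positive integers), which is needed both for $A_L\subset\{0,\ldots,3^L-1\}$ and for $z<y$ strictly; and you should note that when all $c_k\in\{0,1\}$, the inequality $0\leq\sum c_k b_k\leq\sum b_k<3^L$ combined with $\sum c_k b_k\equiv x\pmod{3^L}$ and $0\leq x<3^L$ forces the congruence to be an equality, giving $x\in A_L$.
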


\begin{example}
The sequence $S(0, 9, 11, 12, 20)$ is basic with basis $\{11,12,9,27,\ldots\}$.
\begin{align*}
S(0, 9, 11, 12, 20) &= 0, 9, 11, 12, 20, 21, 23, 27, 32, 36,\ldots\\
&= 0, 9, 11, 12, 11+9, 12+9, 12+11, 27, 12+11+9, 27+9,\ldots
\end{align*}
\end{example}

\begin{proof}[Proof of Theorem \ref{thm:basic}.]
Suppose $B=(b_k)$ satisfies the given hypotheses. Let $N=b_m$ be large enough that $b_k=3^k$ for all $k\ge m$ and $\sum_{k=0}^{m-1}{b_k}<N$. Let $$A:=\left\{\sum_{k=0}^{m-1} \delta_k b_k\mid \delta_k\in \{0,1\}\right\}.$$Let $S$ be the sequence of the sums of subsets of $B$. We will prove that $S$ is a modular Stanley sequence modulo $N$, for which it suffices to prove that $A$ is a modular set modulo $N$.

For each integer $x\ge 0$, let $t_i(x)$ denote the $i$th digit of $x$ in base 3. For each $x\in A$, let $\delta_i(x)$ be the indicator for whether $b_i$ is in the subset of $B$ which is summed to obtain $x$.  Note that $t_i(x)$ is determined uniquely by $\delta_0(x),\delta_1(x),\ldots,\delta_i(x)$, since $x$ is a sum of $b_k$ and $t_i(b_k)=0$ for all $k>i$.

We first show that $A$ is 3-free modulo $N$. Suppose towards contradiction that $x,y,z\in A$ satisfy $2y-z\equiv x\pmod N$. Then, for each $i$, either $\{t_i(x),t_i(y),t_i(z)\}=\{0,1,2\}$ or $t_i(x)=t_i(y)=t_i(z)$. It is impossible to have $\{t_0(x),t_0(y),t_0(z)\}=\{0,1,2\}$, since the digit $t_0(\cdot)$ is determined solely by $\delta_0(\cdot)\in \{0,1\}$. Hence, $t_0(x)=t_0(y)=t_0(z)$ and so $\delta_0(x)=\delta_0(y)=\delta_0(z)$. Now, it is impossible to have $\{t_1(x),t_1(y),t_1(z)\}=\{0,1,2\}$, since the digit $t_1(\cdot)$ is determined solely by $\delta_0(\cdot),\delta_1(\cdot)\in \{0,1\}$ and we know $\delta_0(\cdot)$ is identical for $x,y,z$. Therefore, $t_1(x)=t_1(y)=t_1(z)$ and so $\delta_1(x)=\delta_1(y)=\delta_1(z)$. Continuing in this way, we conclude that all the digits of $x,y,z$ are identical, and so $x=y=z$, a contradiction.

Now suppose that $x\not\in A$ with $0\le x<N$. We must show that $x$ is covered by $A$ modulo $N$. We construct elements $y,z\in A$ stepwise such that $2y-z\equiv x$, in the following manner. At step $-1$, we start out with $y^{(-1)}=z^{(-1)}=0$ and $x^{(-1)}=0$. At step $j$, for $j\ge 0$, we create $y^{(j)},z^{(j)}\in A$ from $y^{(j-1)},z^{(j-1)}\in A$ so that $x^{(j)}:=2y^{(j)}-z^{(j)}$ agrees with $x$ in the digits $t_0(\cdot),\ldots,t_j(\cdot)$.

We assume recursively that $t_0(\cdot),\ldots,t_{j-1}(\cdot)$ have already been matched between $x^{(j-1)}$ and $x$ and that $\delta_j(y^{(j-1)}),\delta_j(z^{(j-1)})$ are both set to 0.  In order to construct $y^{(j)}$ and $z^{(j)}$, we set one, or both, or neither of $\delta_j(y^{(j)}),\delta_j(z^{(j)})$ to 1, while keeping all other $\delta_i(\cdot)$ fixed.

In order to determine how to set $\delta_j(y^{(j)}),\delta_j(z^{(j)})$, consider $t_j(x^{(j-1)})$. If $t_j(x^{(j-1)})=0$, then we set both to 0, since then $t_j(x^{(j)})=0$.  If $t_j(x^{(j-1)})=1$, then we can set either of $\delta_j(y^{(j)}),\delta_j(z^{(j)})$ to 1, while if $t_j(x^{(j-1)})=2$, then we set both of $\delta_j(y^{(j)}),\delta_j(z^{(j)})$ to 1; this again results in $t_j(x^{(j)})=0$.

The important part of this recursive procedure is that at step $j$, when we fix the digit $t_j(\cdot)$, we do not change any of the other digits $t_i(\cdot)$ for $i<j$.  This is because we are adding multiples of $b_j$, and $3^j \mid b_j$. We may affect some of the digits $t_i(\cdot)$, for $i>j$; however, we fix these digits later, up until step $N$, at which point $x^{(j)}$ matches $x$ in the first $N$ digits.  Hence, $x^{(j)}\equiv x\pmod N$. We have proven that $x$ is covered by $A$ modulo $N$, which completes our proof.
\end{proof}

Combining the arguments in Theorem \ref{thm:struc} and Theorem \ref{thm:basic} yields the following theorem.

\begin{theorem}
\label{thm:scalebasic}
Let $A$ be a modular set modulo $N$, and choose $\alpha\in\N$ such that $\gcd(\alpha,N)=1$.  Suppose that $B=(b_k)$ satisfies (i) $3^k$ is the largest power of $3$ dividing $b_k$ for each $k$, and (ii) $b_k=3^k$ for $k$ large enough. Let $S_B$ denote the Stanley sequence generated from the basis $B$. Then,
$$
\alpha\cdot A+N\cdot S_B
$$
is a modular sequence modulo $3^{\ell}\cdot N$ for all sufficiently large $\ell\in\N.$
\end{theorem}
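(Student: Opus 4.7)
The plan is to combine the arguments in the proofs of Theorem \ref{thm:struc}(ii) and Theorem \ref{thm:basic}, with $S_B$ playing the role that $S(0)$ played in the former. By Theorem \ref{thm:basic}, $S_B$ is itself a modular Stanley sequence modulo some power $3^{\ell_0}$ of $3$, and hence (by iterating the remark following Theorem \ref{thm:struc}) modulo $3^\ell$ for every $\ell \geq \ell_0$. Fix $\ell \geq \ell_0$ large enough that $\alpha\,\max(A) + N\,\max(S_B \cap [0,3^\ell)) < 3^\ell N$; such $\ell$ always exist because the decomposition $S_B = (S_B \cap [0,3^\ell)) + 3^\ell \cdot S(0)$, obtained by applying Theorem \ref{thm:struc}(i) to $S_B$ itself, forces $\max(S_B \cap [0,3^\ell))$ to grow only like $3^\ell/2$. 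Set
$$
A' := \alpha \cdot A + N \cdot \bigl(S_B \cap \{0,1,\ldots,3^\ell - 1\}\bigr) \subseteq \{0,1,\ldots,3^\ell N - 1\}.
$$

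The goal is to show that $A'$ is a modular set modulo $3^\ell N$. Once this is established, Theorem \ref{thm:struc}(i) gives $S(A') = A' + (3^\ell N)\cdot S(0)$, and expanding $A'$ and using $S_B = (S_B \cap [0,3^\ell)) + 3^\ell \cdot S(0)$ again yields $S(A') = \alpha A + N S_B$, which is the desired conclusion. The verification of modularity of $A'$ rests on the unique decomposition $y \equiv \alpha a + N b \pmod{3^\ell N}$, valid as a bijection $\{0,\ldots,N-1\} \times \{0,\ldots,3^\ell-1\} \to \{0,\ldots,3^\ell N - 1\}$ because $\gcd(\alpha,N)=1$. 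To check 3-freeness, any putative relation $2y - z \equiv x \pmod{3^\ell N}$ with $x,y,z \in A'$ reduces modulo $N$ (using $\gcd(\alpha,N)=1$) to a relation $2y_A - z_A \equiv x_A \pmod N$ in $A$, forcing $x_A = y_A = z_A$ by 3-freeness of $A$ modulo $N$; the residual equation reduces modulo $3^\ell$ to a relation $2y_B - z_B \equiv x_B \pmod{3^\ell}$ in $S_B \cap [0,3^\ell)$, forcing $x_B = y_B = z_B$. To check covering, given $x \in \{0,\ldots,3^\ell N-1\}\setminus A'$, write $x = \alpha x_A + N x_B$ in the above unique form and paste together a cover of $x_A$ in $A$ modulo $N$ (trivial if $x_A\in A$) with a cover of $x_B$ in $S_B \cap [0,3^\ell)$ modulo $3^\ell$ (trivial if $x_B$ already lies there), obtaining $y,z \in A'$ with $2y - z \equiv x \pmod{3^\ell N}$.

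The main obstacle will be the bookkeeping around the ordering condition $z < y$ in the covering step, particularly when only one of $x_A, x_B$ requires a genuine (nontrivial) cover. This is handled exactly as in the proof of Theorem \ref{thm:struc}(ii): on the side that needs no cover, take the degenerate choice $y_\bullet = z_\bullet = x_\bullet$, so the strict inequality is inherited from the nontrivial side. Apart from this mild subtlety, the argument is a mechanical fusion of the two proofs already given in the paper.
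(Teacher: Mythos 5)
Your overall strategy matches the paper's sketch (the paper only says ``combining the arguments in Theorem \ref{thm:struc} and Theorem \ref{thm:basic} yields the theorem''), and most of what you write is sound: the reduction to showing $A' := \alpha A + N(S_B\cap[0,3^\ell))$ is a modular set, the choice of $\ell$, the ``CRT-like'' bijection, and the 3-freeness check all go through. The 3-freeness check is safe precisely because once $x_A=y_A=z_A$ is forced, the $A$-components cancel \emph{exactly} and there is no leftover carry.

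The covering step, however, has a genuine gap, and it is not the one you flag. Suppose $x_A\notin A$ and you pick $y_A,z_A\in A$ with $z_A<y_A$ and $2y_A-z_A\equiv x_A\pmod N$. Since $2y_A-z_A$ lies in $(0,2N)$, we have $2y_A-z_A=x_A+kN$ with $k\in\{0,1\}$, and $k=1$ can genuinely occur (e.g.\ $A=\{0,2\}$ mod $3$, covering $1$ via $2\cdot 2-0=4$). If you now cover $x_B$ by $y_B,z_B$ with $2y_B-z_B\equiv x_B\pmod{3^\ell}$ and set $y=\alpha y_A+Ny_B$, $z=\alpha z_A+Nz_B$, then
$$
2y-z-x \equiv N\bigl(\alpha k + 2y_B-z_B-x_B\bigr)\equiv N\alpha k \pmod{3^\ell N},
$$
which is \emph{not} $0$ when $k=1$ (e.g.\ whenever $3\mid N$, forcing $3\nmid\alpha$, we get $\alpha k\not\equiv 0\pmod{3^\ell}$). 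So pasting independent covers of $x_A$ and of $x_B$ does not produce a cover of $x$ modulo $3^\ell N$; the carry from the $A$-side contaminates the $S_B$-side.

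The paper's own proof of Theorem \ref{thm:struc}(ii) avoids exactly this by choosing the $A$-cover \emph{first} and only then defining the second-coordinate target: it sets $x_0:=\bigl(x-\alpha(2y_A-z_A)\bigr)/N$, so the carry is folded in, and then covers $x_0$ in $S(0)$ exactly. The fix in your framework is the same: after choosing $y_A,z_A$ and computing $k$, cover the \emph{adjusted} target $(x_B-\alpha k)\bmod 3^\ell$ (not $x_B$) in $S_B\cap[0,3^\ell)$ modulo $3^\ell$; one still checks that this target can fail to lie in $S_B\cap[0,3^\ell)$ while $x_A\in A$ or vice versa, so at least one side gives a nondegenerate pair and $z<y$ follows as you describe. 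With that correction, your argument is a valid rendering of the paper's intended ``mechanical fusion.''
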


Using the theory of basic sequences, we can prove a small result towards Rolnick's Conjecture 5.1 \cite{R}. Before recalling the conjecture, we first state a definition. Given a 3-free set $A$ with elements $a_0<\cdots<a_k$, define a \emph{completion} of $A$ to be a 3-free set $A'$ with elements $a_0<\cdots<a_k<\cdots<a_m$ such that $S(A')$ is regular. For instance, $\{0,4,7\}$ and $\{0,4,9\}$ are two completions of $\{0,4\}$.

\begin{conjecture}[Conjecture 5.1 in \cite{R}]
Every 3-free set has a completion.
\end{conjecture}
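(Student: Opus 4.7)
The strategy is to produce a basic Stanley sequence $T$ for which $A$ is a prefix, in the sense that $T \cap [0, \max A] = A$. Since basic sequences are independent (hence regular with shift index $0$) by the proposition above, any $A' := T \cap [0, N]$ with $N$ large enough that $A' \supsetneq A$ would then be a completion: it is $3$-free, contains $A$ as its initial segment, and satisfies $S(A') = T$. By Theorem \ref{thm:basic}, to construct such a $T$ it suffices to exhibit a basis $B = (b_j)$ with $3^j$ the largest power of $3$ dividing $b_j$ and $b_j = 3^j$ for $j$ large, whose subset sums intersect $[0, \max A]$ in exactly the set $A$.

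First I would examine $A$ position-by-position in base $3$. At each ternary position $j$ I would look at which of the digits $0, 1, 2$ actually appear among the $a_i$. If only digits $0$ and $1$ ever appear, the naive choice $b_j = 3^j$ already gives a valid basis, producing the Stanley sequence $S(0)$; this succeeds exactly when $A$ is itself an initial segment of $S(0)$. Otherwise, at each position $j$ where a digit of $2$ occurs in some $a_i$, one must split the would-be basis element $3^j$ into two (or more) elements, each still satisfying the divisibility constraint, and wire these into the subset decompositions so that the combinations at position $j$ simultaneously produce the correct digit patterns of every relevant $a_i$. A second delicate requirement is that no unwanted subset sum fall in $[0, \max A] \setminus A$, as this would destroy the prefix condition.

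The main obstacle is that this double requirement---realizing every $a_i$ as a distinct subset sum while avoiding every $x \in [0, \max A] \setminus A$---is a rigid combinatorial matching problem whose solvability is not forced by the $3$-freeness of $A$ alone in any obvious way. When the direct basic construction fails, my fallback would be the more flexible setting of Theorem \ref{thm:scalebasic}: search for $\alpha \in \N$ with $\gcd(\alpha, N) = 1$ for some $N = 3^\ell$, together with a small modular set $A_0 \pmod N$, so that $A$ appears as a prefix of $\alpha \cdot A_0 + N \cdot S_B$. A natural plan is induction on $|A|$, building $(A_0, \alpha, N, B)$ one element at a time and using the covering property of modular sets to absorb each new $a_i$; the hard step at every stage is verifying that an arbitrary next element can always be absorbed without destroying $3$-freeness modulo the enlarged modulus or the prefix condition. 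It is precisely this uniform absorbability that I expect to require genuinely new ideas beyond those developed in this paper, which is why the methods of this section yield only a partial result toward the conjecture.
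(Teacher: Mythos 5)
This statement is a conjecture restated from \cite{R}, not a theorem of the paper; the paper does not claim a proof, and neither do you, so your proposal is not ``wrong'' for failing to close the argument. You set up the reduction correctly: a basic Stanley sequence $T$ with $T\cap[0,\max A]=A$ would give a completion $A'=T\cap[0,N]$ for $N$ large, since basic sequences are independent and hence regular. You also correctly isolate the obstruction --- realizing every $a_i$ as a distinct subset sum of a valid basis while excluding every other integer in $[0,\max A]$ --- and correctly predict that the paper's machinery yields only a partial result.

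Your sketch of the partial case, however, differs from what the paper actually does. The Proposition following this conjecture in \S\ref{sec:basic} does not analyze base-$3$ digits position by position or split $3^j$ when a digit $2$ appears. Instead it restricts to $A=\{0,a_1,\ldots,a_n\}$ with pairwise distinct $3$-adic valuations $\nu_3(a_i)$ and with $a_1+a_2>a_n$, and then places the $a_i$ \emph{themselves} into the basis, each occupying the slot $j=\nu_3(a_i)$ that the distinct-valuation hypothesis makes available. The remaining slots with $3^j<a_n$ are filled by $c\cdot 3^j$ for one fixed $c>a_n$ with $3\nmid c$, and slots with $3^j>a_n$ by $3^j$. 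The inequality $a_1+a_2>a_n$ and the size of $c$ together push every unwanted subset sum above $a_n$, so the delicate ``avoidance'' matching you worried about never arises --- it is preempted by scaling. Your digit-splitting approach lacks that escape valve, and neither your sketch nor the paper's argument touches the simplest genuinely hard case, namely $\{0,a,b\}$ with $\nu_3(a)=\nu_3(b)$ (contrast Corollary \ref{cor:completion}, which requires $\nu_3(a)\ne\nu_3(b)$).
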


\begin{proposition}
Let $a_1<\cdots<a_n\in\N$ such that $\nu_3(a_i)\ne\nu_3(a_j)$ for $i\ne j$ where $3^{\nu_3(\cdot)}$ is the highest power of $3$ dividing a number. Also suppose that $a_1+a_2>a_n$. Then the set $\{0,a_1,\ldots,a_n\}$ has a completion.
\end{proposition}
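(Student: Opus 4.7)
The plan is to produce a completion whose Stanley sequence is basic, using Theorem~\ref{thm:basic} to engineer an appropriate basis. Write $\nu_i := \nu_3(a_i)$ and pick $M$ with $3^M > a_n$, so automatically $\nu_i < M$ for each $i$. I will define a sequence $B = (b_k)_{k \ge 0}$ by setting $b_{\nu_i} := a_i$ for $i = 1, \ldots, n$; for each remaining index $k < M$, setting $b_k := 3^k \cdot c_k$ for some integer $c_k$ coprime to $3$ and large enough that $b_k > a_n$; and setting $b_k := 3^k$ for every $k \ge M$. Then $\nu_3(b_k) = k$ for every $k$ and $b_k = 3^k$ eventually, so Theorem~\ref{thm:basic} applies and the set $S_B$ of subset sums of $B$ forms a basic Stanley sequence. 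In particular $S_B$ is independent, and hence regular.

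The next step is to verify that the first $n+1$ elements of $S_B$ in increasing order are precisely $0 < a_1 < \cdots < a_n$. Each element of $S_B$ is a distinct subset sum of $B$. By construction the only $b_k$ that do not exceed $a_n$ are the $a_i$ themselves, so the singleton sums at most $a_n$ are exactly $a_1, \ldots, a_n$. Moreover the two smallest elements of $B$ are $a_1$ and $a_2$, so every subset sum involving at least two summands is at least $a_1 + a_2$, which exceeds $a_n$ by hypothesis. Thus the subset sums of $B$ that are $\le a_n$ consist precisely of $0, a_1, \ldots, a_n$.

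To conclude, pick $K \ge n$ large enough that the initial segment $A'$ of $S_B$ of length $K+1$ generates $S_B$ in the Stanley sense, i.e.\ $S(A') = S_B$; such a $K$ exists because $S_B$, being a Stanley sequence, has some finite seed, so every sufficiently long initial segment of $S_B$ generates $S_B$. By the previous paragraph, the first $n+1$ elements of $A'$ in sorted order are $0, a_1, \ldots, a_n$, so $A' \supseteq \{0, a_1, \ldots, a_n\}$ with this set as an initial segment, and $S(A') = S_B$ is regular because basic implies independent implies regular. Hence $A'$ is the desired completion.

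The main obstacle is the second step: ensuring that no ``parasitic'' subset sum of $B$ falls below $a_n$ and corrupts the initial segment. Singletons $b_k$ at indices outside $\{\nu_1, \ldots, \nu_n\}$ can be made arbitrarily large because Theorem~\ref{thm:basic} only constrains the $3$-adic valuation of $b_k$, not its size. The genuinely restrictive requirement is that two-element subset sums exceed $a_n$, and this is exactly where the hypothesis $a_1 + a_2 > a_n$ enters. Combined with the distinct-valuation hypothesis, which ensures each $a_i$ occupies a unique slot $\nu_i$ in the basis, these assumptions are precisely what allow the basic construction to realize $\{0, a_1, \ldots, a_n\}$ as an initial segment of $S_B$.
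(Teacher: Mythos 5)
Your proof is correct and follows essentially the same approach as the paper's: both constructions invoke Theorem \ref{thm:basic} by building a basis $B$ that places $a_i$ at position $\nu_3(a_i)$, fills the remaining small indices $k$ with multiples of $3^k$ chosen large enough to exceed $a_n$, and sets $b_k = 3^k$ for large $k$, then uses the hypothesis $a_1 + a_2 > a_n$ to ensure the subset sums below $a_n$ are exactly $0, a_1, \ldots, a_n$. The only cosmetic difference is that the paper uses a single constant $c > a_n$ with $\nu_3(c) = 0$ to define all the filler entries $c \cdot 3^j$, whereas you allow independent constants $c_k$; this makes no difference to the argument.
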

\begin{proof}
Let $c\in\N$ such that $c>a_n$ and $\nu_3(c)=0$. Then Theorem \ref{thm:basic} implies that the following set is a basis for an independent Stanley sequence beginning with the terms $0,a_1,\ldots,a_n:$
$$\{c\cdot 3^j\mid 3^j<a_n,\ j\ne\nu_3(a_i)\text{ for }1\le i\le n \}\cup\left\{a_1,\ldots,a_n \right\}\cup\{3^j\colon 3^j>a_n \}.$$
The condition $a_1+a_2>a_n$ is necessary to ensure that the first $n+1$ terms of the basic sequence generated by the above basis are equal to $0,a_1,\ldots,a_n$.
\end{proof}

This proposition clearly implies the following corollary.

\begin{corollary}
\label{cor:completion}
Let $a<b\in\N$ with $\nu_3(a)\ne\nu_3(b)$, then the set $\{0,a,b\}$ has a completion.
\end{corollary}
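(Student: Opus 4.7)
The plan is to apply the preceding Proposition directly in the case $n = 2$, taking $a_1 := a$ and $a_2 := b$. Since the corollary's conclusion is literally a special case of the proposition's conclusion (the set $\{0, a_1, a_2\} = \{0, a, b\}$ has a completion), the only work is to check that the hypotheses of the proposition are satisfied for this choice.

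There are two hypotheses to verify. The first, that $\nu_3(a_i) \ne \nu_3(a_j)$ for $i \ne j$, reduces when $n = 2$ to the single inequality $\nu_3(a) \ne \nu_3(b)$, which is exactly the standing assumption of the corollary. The second hypothesis, $a_1 + a_2 > a_n$, reduces when $n = 2$ to the trivial inequality $a + b > b$, i.e., $a > 0$. Since $0$, $a$, and $b$ are distinct nonnegative integers with $0 < a < b$, we automatically have $a \ge 1$, so this condition holds.

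With both hypotheses in hand, the proposition produces an explicit basis $B$ of the form
$$\{c \cdot 3^j \mid 3^j < b,\ j \ne \nu_3(a),\ j \ne \nu_3(b)\} \cup \{a, b\} \cup \{3^j \mid 3^j > b\},$$
for any $c > b$ with $\nu_3(c) = 0$, and the basic Stanley sequence $S_B$ generated by this basis begins $0, a, b, \ldots$. By the Proposition immediately preceding, $S_B$ is independent (and hence regular in the modified sense of this paper; more importantly, it is regular in Rolnick's original sense once one observes the basic structure). This sequence is therefore a completion of $\{0, a, b\}$.

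There is no substantive obstacle here: the corollary is a direct specialization and the entirety of the real work was done in constructing the basic Stanley sequence in the Proposition. The only conceptual point worth emphasizing is that the condition $a_1 + a_2 > a_n$ is vacuous precisely when $n = 2$, which is exactly why this two-element case can be handled without further assumptions beyond the distinctness of 3-adic valuations.
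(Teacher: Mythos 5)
Your proof is correct and is exactly the specialization the paper intends: the corollary is the $n=2$ case of the preceding proposition, with the hypothesis $a_1+a_2>a_n$ becoming vacuous since it reduces to $a>0$, which is automatic. The paper itself simply states "This proposition clearly implies the following corollary," so you have spelled out the same one-line argument in more detail.
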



\section{Stanley sequences with large gaps between consecutive terms}
\label{sec:gaps}

For arbitrary Stanley sequences $S(A)=(a_n)$, the gaps $a_{n+1}-a_n$ between consecutive elements grow on average, but small gaps do still occur even for large $n$.  In \cite[Problem 4, p. 126]{ELRSS}, the authors ask whether there exists a Stanley sequence $(a_n)$ such that $\liminf_{n\rightarrow\infty}\left(a_{n+1}-a_n\right)=\infty$. This is easily seen to be false for modular sequences, and we believe it to be false for general Stanley sequences.  We here prove the weaker statement that there exist Stanley sequences $(a_n)$ such that $\liminf_{n\rightarrow\infty}\left(a_{n+1}-a_n\right)$ is arbitrarily large.

\begin{proposition}
\label{prop:gaps}
For $m$ a nonnegative integer, define
$$
A_m:=\left\{\sum_{b\in B} b\mid B\subseteq \bigcup_{i=0}^m \left\{2(2^i\cdot 29^{m-i}),6(2^i\cdot 29^{m-i}),11(2^i\cdot 29^{m-i})\right\}\right\}.
$$
Then $A_m$ is a modular set modulo $29^{m+1}$.
\end{proposition}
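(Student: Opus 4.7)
The plan is to establish the claim by induction on $m$, using the structural decomposition $A_m = 2^m\cdot A_0 + 29\cdot A_{m-1}$, which follows from separating the summands with $i=m$ (contributing $2^m\cdot A_0$) from those with $i<m$ (contributing $29\cdot A_{m-1}$) in the definition of $A_m$. The base case $m=0$ reduces to verifying directly that $A_0=\{0,2,6,8,11,13,17,19\}$ is $3$-free modulo $29$ and that the remaining $21$ residues of $\{0,\ldots,28\}\setminus A_0$ are each covered by $A_0$ modulo $29$; this is a short finite computation tabulating $2y-z\bmod 29$ for all pairs $z<y$ in $A_0$.

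For the inductive step, the first observation is that every $x\in 2^m\cdot A_0+29\cdot A_{m-1}$ admits a \emph{unique} decomposition $x=2^m a+29 b$ with $a\in A_0$ and $b\in A_{m-1}$: uniqueness follows from $\gcd(2^m,29)=1$ together with $A_0\subseteq\{0,\ldots,28\}$. To prove $3$-freeness modulo $29^{m+1}$, suppose $x_1,x_2,x_3\in A_m$ satisfy $2x_2\equiv x_1+x_3\pmod{29^{m+1}}$, and decompose $x_j=2^m a_j+29 b_j$. Reducing the congruence modulo $29$ and cancelling the unit $2^m$ forces $a_1,a_2,a_3$ to form a $3$-term modular AP in $A_0$, hence $a_1=a_2=a_3$ by the base case. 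Substituting back, dividing by $29$, and reducing modulo $29^m$ shows that $b_1,b_2,b_3$ form a $3$-term modular AP in $A_{m-1}$; the inductive hypothesis forces $b_1=b_2=b_3$, whence $x_1=x_2=x_3$.

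The covering property proceeds in the mirror direction. Given $x\in\{0,\ldots,29^{m+1}-1\}\setminus A_m$, set $a_x:=2^{-m}x\bmod 29$. Modularity of $A_0$ produces elements $a_z\le a_y$ in $A_0$ with $2a_y-a_z\equiv a_x\pmod{29}$, equal if $a_x\in A_0$ and strictly ordered otherwise. Then $W:=29^{-1}\bigl(x-2^m(2a_y-a_z)\bigr)\bmod 29^m$ is well-defined (the numerator is divisible by $29$ by choice of $a_y,a_z$), so the inductive hypothesis applied to $A_{m-1}$ produces $b_z\le b_y$ in $A_{m-1}$ with $2b_y-b_z\equiv W\pmod{29^m}$, strict if $W\notin A_{m-1}$. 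Setting $y:=2^m a_y+29 b_y$ and $z:=2^m a_z+29 b_z$, both in $A_m$, one verifies that $2y-z\equiv x\pmod{29^{m+1}}$.

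The main obstacle I anticipate is verifying the strict inequality $z<y$ required by the definition of ``covered modulo $N$''. By construction $a_y\ge a_z$ and $b_y\ge b_z$, so $y-z=2^m(a_y-a_z)+29(b_y-b_z)\ge 0$; if both inequalities were equalities then $y=z$, and $x\equiv y\pmod{29^{m+1}}$ combined with $x,y\in\{0,\ldots,29^{m+1}-1\}$ would give $x=y\in A_m$, contradicting $x\notin A_m$. Thus at least one inequality is strict, which yields $z<y$ and completes the induction.
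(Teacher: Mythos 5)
Your approach is essentially the paper's: the same induction, the same structural decomposition (you write $A_m = 2^m A_0 + 29 A_{m-1}$; the paper writes $A_{m+1}=2^{m+1}A_0 + 29 A_m$, which is the same thing reindexed), the same two-stage ``reduce mod $29$, then divide by $29$ and reduce mod the smaller modulus'' arguments for both $3$-freeness and covering. Your handling of the strict inequality $z<y$ is in fact slightly more careful than the paper's, which glosses over the degenerate case where $y_0=z_0$ and $y_m=z_m$.

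However, there is a genuine gap. The definition of a modular set modulo $N$ requires $A\subset\{0,\ldots,N-1\}$, and your proof never verifies $A_m\subseteq\{0,\ldots,29^{m+1}-1\}$. This is not a formality: your own argument for $z<y$ invokes ``$x,y\in\{0,\ldots,29^{m+1}-1\}$'' to conclude $x=y$ from $x\equiv y\pmod{29^{m+1}}$, which presupposes exactly this containment for $y\in A_m$. The containment is the one non-immediate numerical fact in the induction, and it is where the constant $29$ earns its keep. The paper establishes it via the geometric bound
\[
\max(A_m)=\sum_{i=0}^{m} 19\cdot 2^i\cdot 29^{m-i}
= 19\cdot 29^m\sum_{i=0}^{m}\left(\frac{2}{29}\right)^i
< 19\cdot 29^m\cdot\frac{29}{27}<29^{m+1},
\]
so the coefficients $\{2,6,11\}$ with $\max$ equal to $2+6+11=19$ were chosen precisely to make this geometric series converge under $29^{m+1}$. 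You should add this estimate (either as a closed-form bound as above or by a quick induction on $\max(A_m)$) to close the argument.
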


\begin{proof}
We proceed by induction on $m$.

\medskip

{\bf Base case:} $m=0$

A routine computation shows that $A_0=\{0,2,6,8,11,13,17,19\}$ is 3-free modulo $29$ and covers all elements $x\in\{0,\ldots,28\}\backslash A_0$ modulo $29$. Thus, $A_0$ is a modular set modulo $29$.

\medskip

{\bf Induction step:} Suppose $A_m$ is a modular set modulo $29^{m+1}$ and consider $A_{m+1}$. First we want to show that $A_{m+1}\subset\{0,\ldots,29^{m+2}-1\}$. We can easily see that 
$$
A_{m+1}=2^{m+1}\cdot A_0+29\cdot A_m.
$$

Hence,
$$
\max\left(A_{m+1}\right)=\max\left(2^{m+1}\cdot A_0+29\cdot A_{m} \right)=2^{m+1}\cdot 19+29\cdot\max(A_m).
$$
By recursion, this expression equals
\begin{align}\label{eq:geosum}
&\sum_{i=0}^{m+1}{19\cdot 2^{i}\cdot 29^{m+1-i}}=29^{m+1}\cdot 19\sum_{i=0}^{m+1}\left(\frac{2}{29}\right)^i\nonumber \\ &<29^{m+1}\cdot 19\sum_{i=0}^\infty\left(\frac{2}{29}\right)^i=29^{m+1}\cdot 19\cdot \frac{29}{27}<29^{m+2}.
\end{align}
Therefore, $A_{m+1}\subset\{0,\ldots,29^{m+2}-1\}$.

Now we will show that $A_{m+1}$ is 3-free modulo $29^{m+2}$. If not, there exists a 3-AP $z<y<x\in A_{m+1}$ modulo $29^{m+2}$. Write $x=2^{m+1}x_0+29x_m$, for $x_0\in A_0$ and $x_m\in A_m$, and define $y_0,z_0,y_m,z_m$ similarly. Since $z,y,x$ form a 3-AP modulo $29^{m+2}$, we know that $z_0,y_0,x_0$ form a 3-AP modulo $29$. Hence, $x_0=y_0=z_0$ since $A_0$ is 3-free modulo $29$. We conclude that $z_m,y_m,x_m$ is a 3-AP modulo $29^{m+1}$. Since $A_m$ is 3-free modulo $29^{m+1}$, this means that $x_m=y_m=z_m$, and thus that $A_{m+1}$ is 3-free modulo $29^{m+2}$.

Now, we must show that every element $x\in\{0,\ldots,29^{m+2}-1\}\backslash A_{m+1}$ is covered by $A_{m+1}$ modulo $29^{m+2}$. Let $x_0\in \{0,1,\ldots,28\}$ be the unique value such that $2^{m+1}x_0\equiv x\pmod{29}$. If $x_0\in A_0$, set $y_0=z_0=x_0$. If not, pick $z_0,y_0\in A_0$ that cover $x_0$ modulo $29$. Now, define $x_m$ by $x=2^{m+1}(2y_0-z_0)+29x_m$. If $x_m\in A_m$, then set $y_m=z_m=x_m$. Else, we know by our inductive hypothesis that we can pick $z_m,y_m\in A_m$ that cover $x_m$ modulo $29^{m+1}$. Setting $y=2^{m+1}y_0+29y_m$ and $z=2^{m+1}z_0+29z_m$, we observe that $z,y\in A_{m+1}$ cover $x$ modulo $29^{m+2}$, completing our induction.
\end{proof}

Given a Stanley sequence $S(A)=(a_n)$, we write $\gap(A)$ for $\liminf_{n\rightarrow\infty}{\left(a_{n+1}-a_n\right)}$.

\begin{corollary}
\label{cor:liminf}
The modular Stanley sequence $S(A_m)=(a_n)$ has $\gap(A)=2^{m+1}$.
\end{corollary}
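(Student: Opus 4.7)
The plan is to combine the structural result of Theorem \ref{thm:struc}(i) with an inductive analysis of the gaps inside $A_m$ itself. By Theorem \ref{thm:struc}(i), we have $S(A_m) = A_m + 29^{m+1}\cdot S(0)$, so the Stanley sequence decomposes into translated copies of the finite set $A_m$, where the translations are the elements of $29^{m+1}\cdot S(0)$. Since the proof of Proposition \ref{prop:gaps} bounds $\max(A_m) < 29^{m+1}\cdot 19/27$, the translates are well-separated: the gap between the top of one copy and the bottom of the next is at least $29^{m+1} - \max(A_m) > 29^{m+1}\cdot 8/27$, which vastly exceeds $2^{m+1}$. Consequently, $\gap(A_m)$ is determined entirely by the minimum consecutive gap within $A_m$.

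I would then show by induction on $m$ that the minimum gap between consecutive elements of $A_m$ is exactly $2^{m+1}$. The key identity, extracted from the proof of Proposition \ref{prop:gaps}, is $A_m = 2^m\cdot A_0 + 29\cdot A_{m-1}$, obtained by separating the atoms with $i=m$ from those with $i<m$. For the base case $m=0$, one simply reads off $A_0=\{0,2,6,8,11,13,17,19\}$ with consecutive differences $2,4,2,3,2,4,2$, so the minimum gap is $2 = 2^{0+1}$.

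For the inductive step, fix $a\in A_{m-1}$ and consider the coset $29a + 2^m\cdot A_0\subset A_m$: its elements are $29a + 2^m\cdot\{0,2,6,8,11,13,17,19\}$, so its internal consecutive gaps are exactly $2^{m+1}, 4\cdot 2^m, 2^{m+1}, 3\cdot 2^m, 2^{m+1}, 4\cdot 2^m, 2^{m+1}$, with minimum $2^{m+1}$. For two different cosets coming from $a<a'$ in $A_{m-1}$ with $a'-a\ge 2^m$ (by induction), the top of the lower coset is $29a + 19\cdot 2^m$ and the bottom of the upper is at least $29a + 29\cdot 2^m$, leaving a gap of at least $10\cdot 2^m > 2^{m+1}$; in particular, distinct cosets do not interleave and the cross-coset gaps cannot be smaller than the intra-coset ones. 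Hence the minimum gap in $A_m$ is exactly $2^{m+1}$, and this value is attained, e.g., between $0$ and $2^{m+1}$.

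Finally, since the bound $2^{m+1}$ is attained between $29^{m+1}s + 0$ and $29^{m+1}s + 2^{m+1}$ for every $s\in S(0)$, the value $2^{m+1}$ appears as a gap infinitely often in $S(A_m)$, while the analysis above shows no gap in $S(A_m)$ is ever smaller. Therefore $\gap(A_m)=\liminf_{n\to\infty}(a_{n+1}-a_n)=2^{m+1}$. I expect the only delicate step to be ruling out small cross-coset differences in the induction; the rough estimate $29 - 19 = 10 > 2$ is what makes the argument go through and essentially explains the particular choice of modulus $29$ and generators $\{2,6,11\}$ in Proposition \ref{prop:gaps}.
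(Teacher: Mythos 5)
Your proof is correct, and its overall skeleton is the same as the paper's: both arguments use the modular structure $S(A_m)=A_m+29^{m+1}\cdot S(0)$ together with the estimate $\max(A_m)<\tfrac{19}{27}\cdot 29^{m+1}$ to reduce $\gap(S(A_m))$ to the minimum internal gap of the finite set $A_m$, and then induct on $m$.

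Where you genuinely diverge is in the choice of recursive decomposition. The paper's proof of this corollary writes
$$A_{m+1}=2\cdot A_m+29^{m+1}\cdot A_0,$$
peeling off the $i=0$ generators; the inductive hypothesis supplies the gap bound inside each copy of $2\cdot A_m$, and one then checks that the translations by $29^{m+1}\cdot A_0$ do not create smaller gaps at copy boundaries. You instead use the dual identity
$$A_m=2^m\cdot A_0+29\cdot A_{m-1},$$
peeling off the $i=m$ generators (this is the decomposition the paper uses in Proposition~\ref{prop:gaps}, not in the corollary). Your version puts the explicit data in the right place: the minimal gap $2^{m+1}$ is read off directly from the consecutive differences $2,4,2,3,2,4,2$ of $A_0$, while the inductive hypothesis on $A_{m-1}$ is used only to guarantee that distinct cosets $29a+2^m\cdot A_0$ are separated by $29(a'-a)-19\cdot 2^m\ge 10\cdot 2^m>2^{m+1}$. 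This makes the inductive step a bit cleaner than the paper's, which uses the hypothesis for the intra-copy gaps and then has to verify several boundary conditions against the translates $2\cdot 29^{m+1},6\cdot 29^{m+1},\dots$. Both routes turn on the same inequality $29-19>0$ (resp.\ $19/27<1$), and both correctly observe that $2^{m+1}$ is realized between $0$ and the smallest atom $2\cdot 2^m$, so the liminf is attained. No gaps in your argument.
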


\begin{proof} We again proceed by induction on $m$.

{\bf Base case:} For $m=0$, it is readily verified that we have 
$$
\gap(0,2,6,8,11,13,17,19)=2.
$$ 

\medskip

{\bf Induction step:} Suppose that we have $\gap(A_m)=2^{m+1}$. In order to prove that $\gap(A_{m+1})=2^{m+2}$, we need only show that every two consecutive terms of $A_{m+1}\cup\{29^{m+2}\}$ are separated by at least $2^{m+2}$.

Observe that $A_{m+1}=2\cdot A_{m}+\{0,2\cdot 29^{m+1}\}+\{0,6\cdot 29^{m+1}\}+\{0,11\cdot 29^{m+1}\}$. By our inductive hypothesis, we have $\liminf{\left(2\cdot A_m\right)}=2^{m+2}$. Therefore, the gaps between consecutive elements of $A_{m+1}\cup\{29^{m+2}\}$ are at least $2^{m+2}$ if the following conditions hold:

\begin{align*}
2\cdot 29^{m+1}-\max{\left(2\cdot A_m\right)}&\geq 2^{m+2},\\
6\cdot 29^{m+1}-\max{\left(2\cdot A_m+ \{0,2\cdot 29^{m+2}\} \right)}& \geq 2^{m+2},\\
11\cdot 29^{m+1}-\max{\left(2\cdot A_m+ \{0,2\cdot 29^{m+1}\}+\{0,6\cdot 29^{m+1}\} \right)}& \geq 2^{m+2},\\
29^{m+2}-\max{\left(A_{m+1}\right)}&>2^{m+2}.
\end{align*}
For the first condition, we use a recursive calculation similar to \eqref{eq:geosum} to compute:
$$
29^{m+1}-\max\left(A_m\right)\geq 29^{m+1}- \frac{19}{27}\cdot 29^{m+1}>2^{m+1},
$$
and thus $2\cdot 29^{m+1}-\max{\left(2\cdot A_m\right)}\geq 2^{m+2}$. The other conditions follow similarly. We conclude that $\liminf{S(A_{m+1})}=2^{m+2}$, completing the induction.
\end{proof}


\section{Generalization to $p$-free sequences}
\label{sec:pfree}

One can easily generalize the greedy algorithm studied by Odlyzko and Stanley to produce $p$-free sequences where $p>2$ is a prime.

\begin{definition}
Let $A=\{a_0,\ldots,a_k\}$ be a $p$-free set of nonnegative integers satisfying $a_0<\cdots<a_k$. We define the \emph{$p$-Stanley sequence} $S(A)=(a_n)$ generated by $A$ recursively as follows. If $a_0<\cdots<a_n$ have already been defined, then $a_{n+1}$ is the smallest positive integer greater than $a_n$ such that $\{a_0,\ldots,a_n,a_{n+1}\}$ is $p$-free.
\end{definition}

In this section, we will generalize the notion of modular Stanley sequences to modular $p$-Stanley sequences. Throughout the rest of this section $p$ will denote an odd prime. Though one can certainly consider $p$-modular sets and sequences for $p$ \emph{not} prime, these sequences will not have the regular structure given in Theorem \ref{thm:struc}.

\begin{definition}
Let $A$ be a set of integers and $x$ be an integer.  We say that $x$ is \emph{$p$-covered by $A$} if there exist $x_1<\cdots<x_{p-1}$ in $A$ such that $x_1,\ldots,x_{p-1},x$ form a $p$-term arithmetic progression ($p$-AP).

Suppose that $N$ is a positive integer with $A\subseteq \{0,\ldots,N-1\}$. Then, we say that an element $x$ is \emph{$p$-covered by $A$ modulo $N$} if there exists $x_1,\ldots,x_{p-1}\in A$ with $x_1<\cdots<x_{p-1}$ such that $x_1,\ldots, x_{p-1},x$ form a $p$-AP modulo $N$.
\end{definition}

\begin{definition}
Fix a positive integer $N\ge 1$. Suppose there exists a set $A\subset\{0,\ldots,N-1\}$ containing $0$ such that $A$ is $p$-free modulo $N$ and all $x\in\{0,\ldots,N-1\}\backslash A$ are $p$-covered by $A$ modulo $N$. Then $A$ is said to be \emph{modular $p$-free set} modulo $N$, and $S_p(A)$ is said to be a \emph{modular $p$-Stanley sequence} modulo $N$.
\end{definition}

Before we describe how the theory of modular Stanley sequences generalizes to the theory of modular $p$-Stanley sequences, let us prove a well-known lemma. The existence of these sequences provided motivation for Szekeres in his conjecture about the asymptotic behavior of $r_p(n)$ in \cite{ET}.
\begin{lemma}
Let $p>2$ be a prime, then $S_p(0)$ consists exactly of the integers $x\ge 0$ such that $x$ contains only the ``digits'' $\{0,\ldots,p-2\}$ in its base $p$ expansion.
\end{lemma}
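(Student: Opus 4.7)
Let $T$ denote the set of nonnegative integers whose base-$p$ expansion uses only digits from $\{0, 1, \ldots, p-2\}$. The plan is to prove $S_p(0) = T$ by verifying three facts: (i) $0 \in T$; (ii) $T$ is $p$-free; and (iii) every $x \notin T$ is $p$-covered by some $x_1 < \cdots < x_{p-1}$ in $T$ with $x_i < x$. These three facts force $S_p(0) = T$ by a straightforward strong induction on $c$, using the characterization that $c \in S_p(0)$ iff $(S_p(0) \cap [0,c-1]) \cup \{c\}$ is $p$-free: (ii) guarantees the greedy algorithm accepts each element of $T$, and (iii) guarantees it rejects each element outside $T$.

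The verification of (ii) is standard. Suppose $x_1 < \cdots < x_p$ is a $p$-AP in $T$ with common difference $d = p^k m$, $\gcd(m,p)=1$. Since $p^k \mid (x_i - x_1)$, we have $\lfloor x_i/p^k \rfloor = \lfloor x_1/p^k \rfloor + (i-1)m$, so the $k$-th base-$p$ digits of $x_1, \ldots, x_p$ are $\lfloor x_1/p^k\rfloor + (i-1)m \pmod p$. Since $\gcd(m,p)=1$, these run through every residue modulo $p$, so some $x_i$ has digit $p-1$ at position $k$, contradicting $x_i \in T$.

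The main content is (iii). Given $x \notin T$, let $K$ be the nonempty set of positions at which $x$ has digit $p-1$, and set $d := \sum_{k \in K} p^k$. Define $x_i := x - (p-i)d$ for $i = 1, \ldots, p-1$. At each position $k \in K$, the digit of $x$ equals $p-1$, and subtracting $(p-i)d$ subtracts $p-i \leq p-1$ from that digit, producing $i-1 \in \{0, \ldots, p-2\}$ with no borrow. At positions outside $K$ the digit of $x_i$ agrees with that of $x$ and is already in $\{0, \ldots, p-2\}$ by the defining property of $K$. Hence $x_i \in T$, and since $d > 0$ we get $x_1 < \cdots < x_{p-1} < x$, yielding the required $p$-AP.

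The main obstacle is step (iii). The naive attempt of using $d = p^k$ for a single bad position $k$ fails whenever $x$ has several digits equal to $p-1$, since the remaining bad digits survive in every $x_i$. The fix is to cancel all offending digits simultaneously by summing $p^k$ over $k \in K$; the crucial point is that each problematic digit equals exactly $p-1$, the largest possible digit, which is precisely what is needed to absorb a subtraction of up to $p-1$ without triggering a borrow that could propagate and corrupt other digits.
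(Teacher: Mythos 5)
Your proof is correct, and the covering step (iii) is essentially the same construction as in the paper: writing $d = \sum_{k\in K} p^k$ and setting $x_i = x-(p-i)d$ is just an arithmetic rephrasing of the paper's digit-by-digit assignment (the paper sets the digit of $x_j$ to $j$ at each bad position, which is exactly your $x_{j+1}$). Where you genuinely diverge is in step (ii), the $p$-freeness of $T$. The paper argues by a minimal-counterexample induction: if the last digits all agree, divide everything by $p$ to shrink the progression; otherwise the last digits form a nontrivial progression mod $p$ and must hit the forbidden digit $p-1$. You instead pick out the exact position $k$ directly by taking $k = \nu_p(d)$, observe that $\lfloor x_i/p^k\rfloor$ is itself an arithmetic progression with common difference $m$ coprime to $p$, and conclude that the digits at position $k$ run through all residues. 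This is a tidier, non-inductive way to locate the offending digit, and it makes the role of primality of $p$ more transparent (you need $\gcd(m,p)=1$ to force a complete residue system). You also make explicit the ``greedy-algorithm'' deduction that (i)--(iii) together force $S_p(0)=T$, which the paper leaves implicit; this is a modest but worthwhile addition in rigor. Overall: same skeleton, same covering construction, but a cleaner self-contained argument for $p$-freeness.
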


\begin{proof}
Let $t_i^p(x)$ denote the $i$th digit in the base $p$ expansion of $x$. Let 

$$
S_p=\left\{x\ge 0\ :\ t_i^p(x)\in\{0,\ldots,p-2\}\ \forall\  i\right\}.
$$

First we will show that $S_p$ is $p$-free. Suppose towards contradiction that $x_0,\ldots,x_{p-1}\in S_p$ form a $p$-AP, where $x_{p-1}$ is minimal. If $t_0^p(x_0)=\ldots =t_0^p(x_{p-1})$, then $\frac{x_0-t_0^p(x_0)}{p},\ldots,\frac{x_{p-1}-t_0^p(x_{p-1})}{p}$ is a $p$-AP in $S_p$ with strictly smaller $p^{th}$ term, a contradiction. Hence, the $t_0^p(x_j)$ are not identical. Therefore, the $t_0^p(x_j)$ must attain every value in $\{0,\ldots,p-1\}$, which contradicts the fact $t_i^p(x_j)\in\{0,\ldots,p-2\}$ for all $i$ and all $j$. Hence $S_p$ is $p$-free.

Now, we will show that all elements $x\in\N\backslash S_p$ are $p$-covered by $S_p$. Let $x\in\N\backslash S_p$. We choose integers $x_0,\ldots,x_{p-2}$ as follows. If $t_i^p(x)\ne p-1$, then set $t_i^p(x_j)=t_i^p(x)$ for all $0\le j\le p-2$. If $t_i^p(x)=p-1$, then set $t_i^p(x_j)=j$ for all $0\le j\le p-2$. Using this construction it is easy to see that $x_0<\cdots<x_{p-2}<x$ is a $p$-AP.  Thus, $S_p(0)=S_p$.
\end{proof}

It is simple to verify that $S_p(0)$ is a modular $p$-Stanley sequence modulo $1$. This leads us to the following structure theorem for modular $p$-Stanley sequences.

\begin{theorem}\label{thm:pstruc}
Fix a positive integer $N\ge 1$ and suppose $A\subset\{0,\ldots,N-1\}$ contains $0$, is $p$-free modulo $N$, and all $x\in\{0,\ldots,N-1\}\backslash A$ are $p$-covered by $A$ modulo $N$. Then
$$
S_p(A)=A+N\cdot S_p(0).
$$
More generally, if $\alpha\in \N$ is such that $\gcd(\alpha,N)=1$, then
$$
\alpha\cdot A + N\cdot S(0)
$$
is a modular Stanley sequence modulo $3^{\ell}\cdot N$ for some integer $\ell\ge 0$.
\end{theorem}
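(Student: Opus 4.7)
The plan is to follow the architecture of the proof of Theorem~\ref{thm:struc}: the more general second claim specialized to $\alpha=1$ and $\ell=0$ recovers the first claim, so I would focus on the second. Let $S_\ell^p$ denote the initial segment of $S_p(0)$ consisting of its first $(p-1)^\ell$ elements; by the preceding lemma these are exactly the integers whose base-$p$ expansion has length at most $\ell$ and uses only digits in $\{0,\ldots,p-2\}$, so in particular $\max(S_\ell^p)<p^\ell$. Set $A_\ell^p:=\alpha\cdot A+N\cdot S_\ell^p$ and pick $\ell$ large enough that $\max(A_\ell^p)<p^\ell\cdot N$. (I read the modulus ``$3^\ell\cdot N$'' in the statement as ``$p^\ell\cdot N$''.) The goal reduces to showing that $A_\ell^p$ is a modular $p$-free set modulo $p^\ell\cdot N$.

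The backbone of the argument is the bijection $\mathbb{Z}/N\times\mathbb{Z}/p^\ell\to\mathbb{Z}/p^\ell N$ sending $(a,s)\mapsto\alpha a+Ns$, which is well-defined and invertible because $\gcd(\alpha,N)=1$, and which carries $p$-APs to $p$-APs in each factor. For $p$-freeness of $A_\ell^p$ modulo $p^\ell\cdot N$, a hypothetical $p$-AP $x_1<\cdots<x_p$ decomposes uniquely as $x_i=\alpha x_{i,A}+Nx_{i,0}$ with $(x_{i,A})$ a $p$-AP in $A$ modulo $N$ and $(x_{i,0})$ a $p$-AP in $S_\ell^p$ modulo $p^\ell$; the hypothesis on $A$ forces the $x_{i,A}$ to coincide, leaving $p$ distinct $x_{i,0}$ forming a $p$-AP in $S_\ell^p$ modulo $p^\ell$, ruled out by the auxiliary lemma below. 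For covering, given $x\in\{0,\ldots,p^\ell\cdot N-1\}\setminus A_\ell^p$, I decompose $x=\alpha x_A+Nx_0$ and use the modular covering of $A$ when $x_A\notin A$, taking all $x_{i,A}=x_A$ otherwise. Since $S_p(0)\cap[0,p^\ell)=S_\ell^p$, if $x_0\notin S_\ell^p$ then $x_0\notin S_p(0)$, so the Stanley-sequence property of $S_p(0)$ furnishes $x_{1,0}<\cdots<x_{p-1,0}\in S_\ell^p$ covering $x_0$; otherwise I take all $x_{i,0}=x_0$. Gluing via the bijection produces the required $p$-AP in $A_\ell^p$ completing $x$ modulo $p^\ell\cdot N$, with strict ordering following because at least one factor must have nonzero common difference (else $x\in A_\ell^p$, a contradiction).

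The auxiliary lemma, that $S_\ell^p$ is $p$-free modulo $p^\ell$, is the step I anticipate as the main obstacle, and I prove it by base-$p$ digit induction on $\ell$. Let $d$ denote the common difference of the putative $p$-AP $x_1,\ldots,x_p\in S_\ell^p$ modulo $p^\ell$. If $t_0^p(d)\ne 0$, the last digits $t_0^p(x_i)$ attain all $p$ residues modulo $p$, contradicting that each lies in $\{0,\ldots,p-2\}$ for elements of $S_\ell^p$; if $t_0^p(d)=0$, the last digits coincide and $(x_i-t_0^p(x_1))/p$ gives a $p$-AP in $S_{\ell-1}^p$ modulo $p^{\ell-1}$, closing the induction with trivial base case $\ell=0$. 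For $p=3$ the case analysis is essentially immediate, but for general odd $p$ one must carefully argue the all-digits-attained pigeonhole in the nonzero-common-difference case and verify that the descent is legitimate when various digits of the common difference vanish at different levels. With the lemma in hand, the remainder is a direct parallel of Theorem~\ref{thm:struc}, and the first claim drops out of the second by specialization.
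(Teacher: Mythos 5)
Your proposal takes exactly the approach the paper intends: the published proof of this theorem is the single sentence that it ``closely follows that of Theorem~\ref{thm:struc},'' and your argument is a faithful $p$-ary transcription of that proof, including the reduction of the first claim to the case $\alpha=1$, $\ell=0$, the choice of $\ell$ large enough that $\max(A_\ell^p)<p^\ell N$, and the two-step check (freeness and covering) via the CRT-style decomposition $x=\alpha x_A+Nx_0$. You are also right to read the modulus ``$3^\ell\cdot N$'' and ``$S(0)$'' in the statement as copy-over slips for $p^\ell\cdot N$ and $S_p(0)$.

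The one place you go beyond the paper is your auxiliary lemma that $S_\ell^p$ is $p$-free \emph{modulo} $p^\ell$, proved by digit descent. The paper's proof of Theorem~\ref{thm:struc} only establishes that the full sequence $S=\alpha\cdot A+N\cdot S(0)$ is $3$-free in the ordinary sense and that sufficiently large non-elements are covered; the claim that the truncation $A_\ell$ is $3$-free modulo $3^\ell N$ is left implicit (it rests on the self-similarity $S(0)=S_\ell+3^\ell\cdot S(0)$, which is precisely what your lemma encapsulates). Your descent argument is correct: when $p\nmid d$ the last digits exhaust all residues mod $p$, and when $p\mid d$ the differences of the truncated elements, taken modulo $p^{\ell-1}$ after dividing by $p$, still have a nonzero common difference because the $p$-adic valuation of $d$ is strictly less than $\ell$. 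So you have filled a genuine (if minor) gap rather than deviated from the method; the rest is identical.
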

\begin{proof}
The proof closely follows that of Theorem \ref{thm:struc}.
\end{proof}

\begin{example}
The 5-Stanley sequence $S_5(0,3)$ is a modular sequence modulo 25. Note, for instance, that the terms $a_{16}$ through $a_{31}$ equal the terms $a_0$ through $a_{15}$, translated by $a_{16}=25$.

\begin{align*}
S_5(0,3)&=0, 3, 4, 5, 6, 8, 9, 10, 11, 13, 14, 15, 16, 18, 19, 21,\\
& 25, 28, 29, 30, 31, 33, 34, 35, 36, 38, 39, 40, 41, 43, 44, 46,\ldots
\end{align*}
\end{example}

\begin{problem}
Classify all $m$, as a function of $p$, such that $S_p(0,m)$ is ``well-behaved'' (where \emph{modular} represents a good definition of well-behaved). For $p=3$, Odlyzko and Stanley conjectured that the only such $m$ are $3^n$ and $2\cdot 3^n$. For $p=5$, our code suggests that the possible values $m\le 100$ are as follows (written in base 5):
\begin{align*}
&1,3,4,10,22,23,24,30,32,33,34,40,42,43,44,\\
&100,122,124,130,132,133,134,140,142,\\
&212,213,214,220,222,223,224,230,232,233,234,240,242,243,244,\\
&300,312,313,314,320,322,323,324,330,332,333,334,340,342,343,344.
\end{align*}
\end{problem}

We now generalize the concept of a basic Stanley sequence to $p$-Stanley sequences.

\begin{definition}
We say that an $p$-Stanley sequence $S_p(A)=(a_n)$ is \emph{basic} if there exists a sequence $B=(b_k)$ and constant $\alpha\in \N$ such that (i) $b_k=\alpha\cdot p^k$ for $k$ sufficiently large, and (ii) the following property is satisfied:
$$
S_p(A)=\left\{\sum \delta_k b_k\mid \delta_k\in \{0,1,\ldots,p-2\}\text{ with $\delta_k\ne 0$ for finitely many $k$}\right\},
$$
where these sums are all distinct. In this case, we say that $B$ is the \emph{basis} of $S_p(A)$.
\end{definition}

Observe that $S_p(0)$ is basic with basis $\{1,p,p^2,\ldots\}$. We can create many more basic $p$-Stanley sequences with the following theorem, which is proven in the manner of Theorem \ref{thm:basic}.

\begin{theorem}\label{thm:pbasic}
The sequence $B=(b_k)$ is a valid basis, provided that (i) $p^k$ is the largest power of $p$ dividing $b_k$ for each $k$, and (ii) $b_k=p^k$ for $k$ large enough. Note that $(b_k)$ need not be an increasing sequence for the first few values of $k$.
\end{theorem}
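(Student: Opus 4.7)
The plan is to follow the proof of Theorem~\ref{thm:basic} closely, replacing base~$3$ by base~$p$, the digit set $\{0,1\}$ by $\{0,1,\ldots,p-2\}$, and Theorem~\ref{thm:struc} by Theorem~\ref{thm:pstruc}. Choose $m$ large enough that $b_k = p^k$ for all $k \ge m$ and $(p-2)\sum_{k<m} b_k < N := b_m = p^m$, and set
$$
A := \left\{\sum_{k=0}^{m-1} \delta_k b_k \;:\; \delta_k \in \{0,\ldots,p-2\}\right\} \subset \{0,\ldots,N-1\}.
$$
It then suffices to prove that $A$ is a modular $p$-free set modulo $N$. Theorem~\ref{thm:pstruc} (with $\alpha=1$) will yield $S_p(A) = A + N\cdot S_p(0)$; combining this with the base-$p$ description of $S_p(0)$ proved in the lemma preceding Theorem~\ref{thm:pstruc}, and using $b_k=p^k$ for $k\ge m$, one identifies $S_p(A)$ with the claimed set of sums $\sum\delta_k b_k$ and recovers the basic description.

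For the $p$-freeness of $A$ modulo $N$, I would mimic the digit-based induction in Theorem~\ref{thm:basic}. Since $p^k$ exactly divides $b_k$, the $i$th base-$p$ digit $t_i^p(b_k)$ vanishes for $k>i$, so $t_i^p(x)$ depends only on $\delta_0(x),\ldots,\delta_i(x)$ for $x\in A$. In particular $t_0^p(x)=\delta_0(x)\cdot(b_0\bmod p)$ is a bijection from $\{0,\ldots,p-2\}$ onto a $(p-1)$-element subset of $\mathbb{Z}/p\mathbb{Z}$. If $x_1,\ldots,x_p\in A$ form a $p$-AP modulo $N$, then their first digits form a $p$-AP modulo $p$, which must either be constant or cover all $p$ residues; only $p-1$ residues are attainable, so $\delta_0(x_j)$ is constant in $j$. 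Iterating on higher-order digits (after stripping off the identical $\delta_0 b_0$ contribution) then forces $x_1=\cdots=x_p$. The same argument shows the digit representation is unique, so $|A|=(p-1)^m$.

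For covering modulo $N$, given $x\in\{0,\ldots,N-1\}\setminus A$, I would construct $x_1,\ldots,x_{p-1}\in A$ with $(x_1,\ldots,x_{p-1},x)$ a $p$-AP modulo $N$ by determining the common difference $d$ and the digits $\delta_i(x_j)$ one position at a time. At step $i$, having fixed $d$ and the $x_j$ modulo $p^i$ in accordance with the $p$-AP relation, I pick a new base-$p$ digit $c := t_i^p(d)\in\mathbb{Z}/p\mathbb{Z}$. Writing $m_i := b_i/p^i$, the $p$-AP requirement modulo $p^{i+1}$ forces $\delta_i(x_j)$ to satisfy the linear relation $\delta_i(x_j)\, m_i \equiv E_j + j c \pmod p$ for each $j=1,\ldots,p-1$, where $E_j$ depends only on previously determined data. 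The admissibility condition $\delta_i(x_j)\not\equiv p-1\pmod p$ unpacks to the single forbidden value $c\not\equiv (-m_i-E_j)/j\pmod p$ for each $j$, so there are at most $p-1$ forbidden residues in $\mathbb{Z}/p\mathbb{Z}$, leaving at least one admissible $c$. After $m$ such steps, $x_j\in A$ and the sequence is a $p$-AP modulo $N$; the ordering $x_1<\cdots<x_{p-1}$ is handled exactly as in Theorem~\ref{thm:basic}.

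The crux of the argument is this pigeonhole step in the covering construction: exactly $p-1$ forbidden residues against $p$ available choices. That this count is favorable is not a coincidence, as the digit set $\{0,\ldots,p-2\}$ misses exactly one residue modulo $p$, which is precisely what both (i) prevents the $p$-freeness argument's first-digit $p$-AP from covering all residues and (ii) leaves one slot open in the covering argument's choice of $c$. Beyond carefully verifying this tight accounting and the management of carries between positions, the proof is a routine generalization of Theorem~\ref{thm:basic}.
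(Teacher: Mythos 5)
Your proposal is a sound execution of the paper's plan (the paper only states that Theorem~\ref{thm:pbasic} is ``proven in the manner of Theorem~\ref{thm:basic}'' and gives no further detail, so the proof of Theorem~\ref{thm:basic} is the relevant comparison). The $p$-freeness argument is a direct transcription: the first base-$p$ digit of an element of $A$ ranges over only $p-1$ residues because $\delta_0\in\{0,\ldots,p-2\}$ and $\gcd(b_0,p)=1$, so a $p$-AP of first digits cannot be full and must be constant; inducting up the digit positions forces all elements equal. That is exactly the paper's argument with $\{0,1\}$ and $\{0,1,2\}$ replaced by $\{0,\ldots,p-2\}$ and $\mathbb{Z}/p\mathbb{Z}$.

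Where you genuinely diverge is in the covering step, and the divergence is the right move. For $p=3$ the paper can freely choose the two bits $\delta_j(y),\delta_j(z)\in\{0,1\}$ at each position and do a three-way case split, because any two points sit in a $3$-AP. For $p>3$ the $p-1$ elements $x_1,\ldots,x_{p-1}$ are \emph{not} independent — once the common difference $d$ is fixed, all $x_j=x-(p-j)d$ are determined — so the paper's case-analysis template does not lift verbatim. Your reparametrization by the digit $c=t_i^p(d)$, together with the count that each $j\in\{1,\ldots,p-1\}$ forbids exactly one residue for $c$ (namely the one forcing $\delta_i(x_j)\equiv p-1$), leaves at least one admissible $c$ and is the clean way to make the induction go through; the fact that $p^i$ exactly divides $b_i$ ensures previously fixed digits are untouched, just as in the paper. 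Your identification of the ``$p-1$ forbidden versus $p$ available'' pigeonhole as the crux is accurate, and the passage from $A$ being a modular $p$-free set mod $N=p^m$ to the basic description via $S_p(A)=A+N\cdot S_p(0)$ matches the paper's use of Theorem~\ref{thm:pstruc}. The only thing left implicit (as it also is in the paper's Theorem~\ref{thm:basic} proof) is checking the ordering $x_1<\cdots<x_{p-1}$ demanded by the definition of being $p$-covered modulo $N$, but your construction produces distinct terms with a nonzero common difference once $x\notin A$, and this can be sorted out in the same informal way the paper does for $p=3$.
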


\begin{example}
The sequence $(7,5,25,\ldots)$ is a basis for the $5$-Stanley sequence that starts out:
\begin{align*}
&(0, 5, 7, 10, 12, 14, 15, 17, 19, 21, 22, 24, 25, \ldots)\\
&(0, 5, 7, 2\cdot 5, 7+5, 2\cdot 7, 3\cdot 5, 7+2\cdot 5, 2\cdot 7+5, 3\cdot 7, 7+3\cdot 5, 2\cdot 7+2\cdot 5, 25,\ldots).
\end{align*}
\end{example}

Theorem \ref{thm:scalebasic} also generalizes naturally to the $p$-free setting.


\section{Future directions}
\label{sec:future}

The study of Stanley sequences has been marked by breakthrough results in which new and unexpected forms of structure are shown to be possible.  While most Stanley sequences are disorderly and their behavior is a mystery, the simple sequence $S(0)$ is the starting point that shows that some beautiful structures are possible. Odlyzko and Stanley discovered orderly sequences of the form $S(0,3^n)$ and $S(0,2\cdot 3^n)$, which Rolnick generalized in turn with the discovery of independent and regular sequences. In this paper, we have extended this definition in turn to the modular and pseudomodular sequences.

It is difficult to assess what novel structures may yet be discovered in Stanley sequences. While we believe that modular and pseudomodular sequences comprise the totality of sequences with Type 1 growth, it is possible that some wholly new class of well-structured Stanley sequences may be derived using fresh perspectives or greater computational power. It is, for example, possible that there exists a Stanley sequence that is in some sense ``denser'' than the sequence $S(0)$. We make this notion explicit in the following conjecture.

\begin{conjecture}
\label{conj:denser}
Let $S(0)=(s_n)$. Then, there exists a modular sequence $S(A)=(a_n)$, with $a_n<s_n$ for all sufficiently large $n$.
\end{conjecture}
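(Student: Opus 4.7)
The plan is to exploit the multiplicativity of the product operation $\otimes$ (introduced just after Theorem \ref{thm:struc}) to iteratively amplify a modular sequence whose asymptotic growth rate is slightly slower than that of $S(0)$. The first ingredient is that for any modular Stanley sequence $S(A) = (a_n)$ with modulus $N$ and $|A| = m$, setting $\alpha(A) := N/m^{\log_2 3}$, one has
\[
\limsup_{n \to \infty} \frac{a_n}{n^{\log_2 3}} = \alpha(A), \qquad \liminf_{n \to \infty} \frac{a_n}{n^{\log_2 3}} = \frac{\alpha(A)}{2}.
\]
This follows from Theorem \ref{thm:struc}(i): since $S(A) = A + N \cdot S(0)$ and $\max A < N$, the translates $A + N s_q$ are disjoint and sorted in blocks of size $m$, so writing $n = mq + r$ with $0 \le r < m$ gives $a_n = a^{(r)} + N s_q$, where $a^{(r)}$ is the $r$th smallest element of $A$. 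The identities $s_{2^k} = 3^k$ and $s_{2^k-1} = (3^k-1)/2$, together with the direct bounds $n^{\log_2 3}/2 \le s_n \le n^{\log_2 3}$ (provable by a routine induction on the binary expansion of $n$), show that the extrema of $s_n/n^{\log_2 3}$ are $1$ and $1/2$, attained (asymptotically) at $n = 2^k$ and $n = 2^k - 1$. These transfer to $a_n/n^{\log_2 3}$ scaled by $\alpha(A)$; in particular, $\liminf s_n/n^{\log_2 3} = 1/2$ for $S(0)$ itself.

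The second ingredient is that $\otimes$ is multiplicative on $\alpha$: if $A$ is modular mod $M$ and $B$ is modular mod $N$, then the proposition immediately following Theorem \ref{thm:struc} gives $A \otimes B$ modular mod $MN$ with $|A \otimes B| = |A| \cdot |B|$, so $\alpha(A \otimes B) = \alpha(A) \cdot \alpha(B)$. Now choose any modular set $A_0$ with $\alpha(A_0) < 1$. The final row of Table \ref{table:notindep} supplies one with $|A_0| = 63$ modulo $673$, giving $\alpha(A_0) = 673/63^{\log_2 3} \approx 0.946 < 1$. Define $A_K := A_0^{\otimes K}$, which is modular by iterated application of that proposition, with $\alpha(A_K) = \alpha(A_0)^K$. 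Choose $K$ large enough that $\alpha(A_K) < 1/2$; for the $A_0$ above, $K = 13$ suffices.

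Combining the two ingredients, the modular Stanley sequence $(a_n) := S(A_K)$ satisfies $\limsup_n a_n/n^{\log_2 3} = \alpha(A_K) < 1/2 = \liminf_n s_n/n^{\log_2 3}$, and a standard $\varepsilon$-argument then yields $a_n < s_n$ for all sufficiently large $n$, proving Conjecture \ref{conj:denser}. The main technical obstacle is the first step, namely verifying that the $\limsup$ and $\liminf$ are exactly $\alpha(A)$ and $\alpha(A)/2$ and are not exceeded at any ``off-center'' index $n = mq + r$ with $r \ne 0$. This should follow from the uniform bound $a^{(r)} \le N - 1$, which makes $a^{(r)}/n^{\log_2 3}$ a negligible $o(1)$ perturbation, so that the true asymptotic extrema are attained only at the aligned indices $n = m \cdot 2^k$ and $n = m \cdot 2^k - 1$.
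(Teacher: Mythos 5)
This statement is labeled a \emph{conjecture} in the paper: the authors offer no proof, and in the paragraph that follows they describe only a modular set mod $79$ with $|A|=15$ for which $a_n < s_n$ holds \emph{infinitely often}, explicitly remarking that obtaining $a_n < s_n$ for \emph{all} large $n$ ``appears to be considerably more difficult.'' So your proposal cannot be compared to the paper's proof; rather, it is an attempt to resolve the conjecture outright.

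That said, your argument looks sound, and the key insight — that the density parameter $\alpha(A) = N/|A|^{\log_2 3}$ is multiplicative under $\otimes$, so a single example with $\alpha < 1$ can be amplified until $\alpha < 1/2$ — appears to be exactly what the authors missed. The asymptotic claim $\limsup a_n/n^{\log_2 3} = \alpha(A)$, $\liminf = \alpha(A)/2$ follows cleanly from Theorem \ref{thm:struc}(i): the translates $A + Ns_q$ are disjoint and occur in sorted blocks of length $m = |A|$, and the bounded contribution of $a^{(r)} \le N-1$ washes out. The two auxiliary bounds $n^{\log_2 3}/2 \le s_n \le n^{\log_2 3}$ do hold and can be checked by the induction you describe (reducing to $2 + u^{\alpha} \ge (1+u)^{\alpha} \ge 1 + u^{\alpha}$ for $u\in[0,1]$, $\alpha = \log_2 3$), with the extrema tight at $n = 2^k$ and $n = 2^k - 1$. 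The multiplicativity of $\alpha$ under $\otimes$ follows because $|A\otimes B| = |A|\cdot|B|$ (the map $(a,b)\mapsto a+Mb$ is injective when $A\subset\{0,\dots,M-1\}$) and the moduli multiply, per the paper's proposition following Theorem \ref{thm:struc}. Finally, $\limsup a_n/n^{\log_2 3} < 1/2 = \liminf s_n/n^{\log_2 3}$ immediately forces $a_n < s_n$ eventually.

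Two caveats worth recording. First, the argument hinges on the correctness of the Table \ref{table:notindep} data: the entire proof reduces to the existence of one modular set with $\alpha < 1$, and that existence rests on the claimed final-row example (modulus $673$). Incidentally, that row lists $62$ elements, not the stated $63$; this does not affect your conclusion since $673/62^{\log_2 3} \approx 0.97 < 1$ as well, but it should be re-verified computationally. Second, be explicit that $\alpha(A)$ is well-defined as an invariant of the sequence $S(A)$ rather than the chosen representative modular set — e.g., replacing $(A,N)$ by $(A + N\{0,1\},\, 3N)$ scales $N$ by $3$ and $|A|^{\log_2 3}$ by $2^{\log_2 3} = 3$, leaving $\alpha$ fixed — so the multiplicativity is a statement about sequences, not presentations. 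With these filled in, this appears to be a genuine resolution of Conjecture \ref{conj:denser}, which goes beyond what the paper establishes.
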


Consider the modular sequence $S(A)=(a_n)$ with
$$A=\{0, 5, 7, 8, 12, 13, 15, 20, 29, 36, 44, 55, 62, 63, 70\},$$
which we presented in Table \ref{table:notindep}. This sequence is modular with modulus 79, and it satisfies $a_n<s_n$ for infinitely many $n$. (No examples of such a sequence were previously known, and we know of no independent Stanley sequence which satisfies this property.) It appears to be considerably more difficult, however, to identify a modular sequence for which $a_n<s_n$ is always true for $n$ large enough.

While independent Stanley sequences are modular, the cardinality of their modular set is always a power of 2. A key insight of this paper is that modular sets can have other cardinalities; this opens the way to general modular sequences. We now conjecture that, in fact, every sufficiently large integer is the cardinality of some modular set.

\begin{conjecture}
\label{conj:allsizes}
There exists an $n_0\in\N$ such that for all $n\ge n_0$, there exists an integer $N_n\in\N$ and a modular set $A_n$ modulo $N_n$ such that $|A_n|=n$.
\end{conjecture}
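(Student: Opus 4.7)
The conjecture asserts that modular sets exist of every sufficiently large cardinality. The multiplicative tools of Section \ref{sec:mod} — the product $\otimes$, together with Theorems \ref{thm:struc}, \ref{thm:basic}, and \ref{thm:scalebasic} — show that the set $\mathcal{C} \subseteq \N$ of cardinalities realized by modular sets is closed under multiplication and contains every power of $2$ (via independent sequences), as well as $13, 14, 15, 17, 21, 63$ from Table \ref{table:notindep}. However, any multiplicatively generated subset of $\N$ misses every prime larger than all its generators, so these closure operations alone cannot yield a cofinite $\mathcal{C}$. A proof of Conjecture \ref{conj:allsizes} must therefore introduce a genuinely new source of modular sets, one that can be tuned to hit each target cardinality separately.

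The plan I would pursue is, for each target cardinality $n$, to construct a modulus $N \in [n, n^2]$ and a $3$-free subset $A \subseteq \{0, 1, \ldots, N-1\}$ containing $0$ of cardinality $n$ such that every element of $\{0, \ldots, N-1\} \setminus A$ is covered by $A$ modulo $N$. The structural bound $N \le n^2$ is forced because the $\binom{n}{2}$ ordered pairs $z<y$ in $A$ must between them cover at least $N-n$ residues. For moderate $n$ the construction is purely computational and extends Wroblewski's tables and that of the present paper. For large $n$, I would aim for one of two theoretical routes: (a) a \emph{perturbation lemma} showing that an existing modular set $A$ of cardinality $n$ modulo $N$ can be altered to cardinality $n \pm 1$ by swapping a small number of residues and replacing $N$ by $3N$ or some related multiple — the additional freedom in the larger modulus being used to restore both $3$-freeness and covering; or (b) a \emph{probabilistic construction} in the spirit of Behrend, in which a random $3$-free subset of $\{0, 1, \ldots, N-1\}$ of size $n$ with $N$ of order $n^2$ is chosen, and a union bound over at most $O(n^2)$ residues controls the probability that any residue is left uncovered.

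The main obstacle is precisely this construction step at odd or prime cardinalities not already reachable by multiplicative combinations. None of the existing tools generate new primes, and I am not aware of any perturbation that provably shifts $|A|$ by exactly $\pm 1$ while simultaneously preserving both $3$-freeness and the covering condition; in practice these two conditions interact tightly, since every residue removed from $A$ must afterwards be covered and every residue added must not create any $3$-AP modulo $N$. The probabilistic route is attractive but technically delicate: a Behrend-type $3$-free set is not automatically covering, and obtaining positive probability of simultaneous success for \emph{every} sufficiently large $n$, rather than for a density-one subset, seems to be the crux of Conjecture \ref{conj:allsizes} and is where the bulk of any proof's effort would have to be invested.
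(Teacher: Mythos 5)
This statement is a \emph{conjecture}, and the paper offers no proof of it; the only surrounding discussion observes that independent sequences give modular sets of every cardinality $2^m$, that prefix subsequences of a modular $S(A)$ with $|A|=n$ give modular sets of cardinality $2^m n$, and that the product $\otimes$ closes the set of achievable cardinalities under multiplication. Your analysis matches and sharpens that discussion: you correctly note that a multiplicatively generated set of integers misses all primes exceeding its generators, so the tools of \S\ref{sec:mod} together with the finitely many examples in Table \ref{table:notindep} can never yield a cofinite set of cardinalities, and you correctly derive the structural constraint $N\le\binom{n}{2}+n$ from the covering requirement. You are also right that neither a perturbation argument (swapping residues to shift $|A|$ by $\pm 1$ while preserving both $3$-freeness and covering, possibly enlarging the modulus) nor a Behrend-style probabilistic construction has been carried out, and that these are exactly the kind of ``genuinely new source of modular sets'' the problem demands. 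In short, your proposal does not prove the conjecture and does not claim to; it is an accurate assessment of why the conjecture is open and what a proof would need, consistent with the paper's own treatment of the statement as unresolved.
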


Clearly there exist modular sets of cardinality $n$ for all $n=2^m$ where $m\in\N$. More generally, if there exists a modular set $A$ of cardinality $n$, then we can use prefix subsequences of $S(A)$ to create modular sets of cardinality $2^m\cdot n$ for all $m\in \N$. Also, if we have two modular sets $A_1,A_2$ of cardinality $n_1,n_2$ respectively, then their product $S_1\otimes S_2$ is a modular set of cardinality $n_1\cdot n_2$.

Finally, we expect that many properties of modular $3$-Stanley sequences will generalize to modular $p$-Stanley sequences. However, constructing $p$-Stanley sequences requires far more computational power; the natural algorithm for constructing the first $n$ terms takes time $O(n^{p-1})$. Therefore, it becomes increasingly difficult to construct interesting examples of $p$-Stanley sequences. We offer the following conjectures.

\begin{conjecture}
For any $n\in\N\cup\{0\}$, there exists a modular $p$-free set $A$ such that 
$$|A|\ne (p-1)^n.
$$
\end{conjecture}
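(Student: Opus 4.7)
The most natural reading of this conjecture, paralleling the $p=3$ examples in Table \ref{table:notindep}, is that there exists a modular $p$-free set whose cardinality is not of the form $(p-1)^n$ for any $n$. Basic $p$-Stanley sequences (Theorem \ref{thm:pbasic}) always yield modular sets of cardinality $(p-1)^m$, so the conjecture asserts the existence of ``exotic'' modular $p$-free sets lying outside that family. The plan is to combine a single seed construction with a product operation: first exhibit, for each odd prime $p$, one modular $p$-free set $A_p$ with $|A_p|$ not a power of $p-1$, and then use the $p$-free analog of $\otimes$ to produce infinitely many.

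For the seed step, for small primes a direct computational search suffices: fix a candidate modulus $N$, enumerate subsets of $\{0,\ldots,N-1\}$ containing $0$, test the $p$-free-modulo-$N$ property, and verify that every non-member is $p$-covered modulo $N$. For $p=3$ the entries of Table \ref{table:notindep} already furnish seeds. For the propagation step, the $\otimes$ operation from Section \ref{sec:mod} generalizes essentially verbatim to the $p$-free setting: if $A_p$ is modular modulo $M$ and $B$ is modular modulo $N$, then $A_p + M\cdot B$ is modular modulo $MN$. Taking $B$ from a basic $p$-Stanley sequence of size $(p-1)^m$ gives a modular $p$-free set of size $|A_p|\cdot (p-1)^m$, which stays outside $\{(p-1)^n\}$ provided $|A_p|$ does. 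Thus a single seed yields the conjecture in a strong form, with infinitely many witnessing cardinalities.

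The main obstacle is producing a seed for \emph{every} odd prime $p$ without case-by-case computation. A natural theoretical route is to begin with the basic $p$-Stanley sequence $S_p(0)$ modulo $N=p^\ell$ for large $\ell$, identify an element of the associated modular set whose role in the covering is redundant (that is, $p$-covered by more than one admissible $(p-1)$-term progression modulo $N$), and excise it to produce a modular $p$-free set of cardinality $(p-1)^\ell - 1$. Showing that such redundancy must occur for large $\ell$ would likely proceed by a pigeonhole or dimension count comparing the number of covering constraints with the number of residues to cover. Alternatively, one could adapt the gap-construction of Proposition \ref{prop:gaps}, replacing the triple $\{2,6,11\}$ with a carefully chosen $(p-1)$-tuple whose symmetric sum structure upsets the $(p-1)$-power cardinality pattern. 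Absent a fully uniform construction, the conjecture is likely best attacked prime by prime, in the computational and example-driven spirit of the rest of the paper.
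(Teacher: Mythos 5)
This statement is presented in the paper as a \emph{conjecture}, and the paper gives no proof of it; it explicitly says that the only resolved case is $p=3$ (via the examples in Table~\ref{table:notindep}) and that ``the only known method to produce such a set is by exhaustive search.'' So there is no paper proof to compare against, and the right assessment is whether your argument actually closes the conjecture. It does not. You are right that the statement as literally quantified (``for any $n$ there exists $A$ with $|A|\ne(p-1)^n$'') is nearly vacuous, and that the intended meaning is that some modular $p$-free set has cardinality lying outside $\{(p-1)^n : n\ge 0\}$ altogether.

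Your propagation step is sound and matches the paper's own remark that once one exotic set is found, others follow: if $A$ is a modular $p$-free set modulo $M$ and $B$ is modular modulo $N$, then $A\otimes B = A + M\cdot B$ is modular modulo $MN$ with $|A\otimes B| = |A|\cdot|B|$, and $|A|\cdot(p-1)^m$ is a power of $p-1$ if and only if $|A|$ is (since $p-1>1$). But the seed step---exhibiting, for each odd prime $p$, one modular $p$-free set whose size is not a power of $p-1$---is precisely the open content of the conjecture, and neither of your two proposed routes closes it. Excising a ``redundant'' element from the modular set underlying $S_p(0)$ modulo $p^\ell$ is not justified: removing an element can destroy covering for residues that were $p$-covered only through progressions involving it, and the removed element must itself become $p$-covered by the remaining set; a pigeonhole count of covering pairs does not establish either requirement. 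Adapting Proposition~\ref{prop:gaps} also does not obviously yield exotic cardinalities, since that construction iterates a base set of size $8=2^3$ and produces modular sets of cardinality $8^{m+1}$, which is still a power of $p-1=2$ in the $p=3$ case; replacing the base set with one of non-power size is just the seed problem again. You are candid that the seed must be found computationally, prime by prime, which is the actual state of the art---so what you have is a correct reduction (seed plus $\otimes$) but not a proof, consistent with the paper's classification of this as an open conjecture for $p>3$.
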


The only resolved case of this conjecture is when $p=3$, and examples of such modular sets can be found in Table \ref{table:notindep}. The greedy algorithm easily produces modular $p$-free sets with cardinality $(p-1)^n$, but producing other modular $p$-free sets is harder. The only known method to produce such a set is by exhaustive search. Of course, once one finds one such set, producing others is easy. Just as in the $3$-free case, we expect all sufficiently large integers to appear as the cardinality of a modular $p$-free set. We state here the $p$-free generalizations of Conjectures \ref{conj:denser} and \ref{conj:allsizes}.

\begin{conjecture}
Let $S_p(0)=(s_n)$. Then, there exists a modular $p$-Stanley sequence $S_p(A)=(a_n)$ with $a_n<s_n$ for all sufficiently large $n$.
\end{conjecture}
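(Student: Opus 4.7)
The plan is to reduce the conjecture to a density question about modular $p$-free sets, and then attempt a Behrend-type construction. By Theorem~\ref{thm:pstruc}, if $A$ is a modular $p$-free set modulo $N$ with $|A|=m$, then $S_p(A)=A+N\cdot S_p(0)$. A counting argument comparing the number of elements up to $X$ on both sides shows that the $n$-th term of $S_p(A)$ satisfies $a_n\sim c\cdot N\cdot (n/m)^{\log_{p-1} p}$, whereas $s_n\sim c'\cdot n^{\log_{p-1} p}$. Thus $a_n<s_n$ for all sufficiently large $n$ precisely when $m>N^{\log_p(p-1)}$. The problem then reduces to exhibiting, for each odd prime $p$, a single modulus $N$ and a modular $p$-free set $A\subseteq\{0,\ldots,N-1\}$ whose cardinality exceeds this threshold.

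To construct such an $A$, I would proceed in two stages. First, obtain a $p$-free set $B\subseteq\{0,\ldots,N-1\}$ of size $N^{1-o(1)}$ via a Behrend-style sphere construction in a high-dimensional box; this density exceeds the threshold $N^{\log_p(p-1)}$ by a wide margin. Second, identify the residues modulo $N$ that fail to be $p$-covered by $B$ and augment $B$ with additional elements to cover them, without creating a $p$-term arithmetic progression. If the augmentation can be carried out while retaining $|A|>N^{\log_p(p-1)}$, the conjecture follows. Once a single witness $A$ is found, the $\otimes$-product and Theorem~\ref{thm:pbasic} automatically yield an entire family of moduli at which the inequality holds, since both operations preserve the ratio $\log_N|A|$.

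The hard part will be the second stage. Behrend-type sets carry no built-in compatibility with the covering condition, and the empirical evidence in Table~\ref{table:notindep} suggests that nature is not generous here: the densest known modular $3$-free set ($|A|=15$, $N=79$) satisfies $15<79^{\log_3 2}\approx 15.76$, just barely missing the threshold---which is exactly why the corresponding sequence attains $a_n<s_n$ infinitely often but not eventually. Consequently, a proof cannot proceed by rescaling known examples; it seems to demand either a genuinely new algebraic construction (for example, exploiting $(\mathbb{Z}/N\mathbb{Z})^d$-structure in a way that simultaneously enforces $p$-freeness and the covering property) or a sharp existence argument showing that a typical Behrend set admits an inexpensive modularization. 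As an intermediate checkpoint, I would first attack the $p=3$ case, where the required density gap is smallest and moderate computer searches may already succeed; the blueprint obtained there should generalize, since the asymptotic exponent $\log_p(p-1)$ approaches $1$ as $p$ grows and the Behrend slack $1-\log_p(p-1)$ never vanishes.
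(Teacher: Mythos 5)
This statement is posed as an open conjecture in \S\ref{sec:future}; the paper offers no proof and explicitly notes that the closest known example (the modular set of size $15$ modulo $79$ from Table~\ref{table:notindep}) attains $a_n<s_n$ only infinitely often, not eventually. So your proposal is not being measured against an existing proof, and two gaps keep it from constituting one.

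The density criterion you derive is not correct as stated. The counting function $|S_p(0)\cap[0,X]|$ is not asymptotic to a single power of $X$: the ratio $|S_p(0)\cap[0,X]|/X^{\log_p(p-1)}$ oscillates within a bounded interval with multiplicative period $p$ in $X$. Consequently, whether $a_n<s_n$ holds for all sufficiently large $n$ depends on the phase offset determined by $\log_p N$ modulo $1$, not just on whether $|A|>N^{\log_p(p-1)}$. Your own numerics already show the ``precisely when'' cannot hold: the $N=79$ example has $|A|=15$, which is \emph{below} the threshold $79^{\log_3 2}\approx 15.76$, yet achieves $a_n<s_n$ infinitely often; under your claimed equivalence a density below threshold would force $a_n>s_n$ eventually. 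A correct reduction must compare the $\liminf$ of one normalized counting function against the $\limsup$ of the other, which raises the density requirement by a phase-dependent factor. Separately, the Behrend stage is a research program rather than an argument: you propose to augment a high-density Behrend-type $p$-free subset of $\{0,\ldots,N-1\}$ so that every residue is $p$-covered, but the covering condition is global, each element added to cover a residue introduces many new potential progressions threatening $p$-freeness, and there is no control on the size of the needed augmentation. You acknowledge this is the hard part; it is in fact the entire content of a proof. A smaller caveat: $\otimes$ does not preserve $\log_N|A|$ when combining sets with different exponents---it yields a weighted average---so propagating a witness to other moduli requires iterating $A\otimes A$, not combining with arbitrary modular sets or bases.
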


\begin{conjecture}
For every odd prime $p$ there exists a natural number $n_p$ such that for all $n\ge n_p$, there exists a modular $p$-free set $A$ with $|A|=n$.
\end{conjecture}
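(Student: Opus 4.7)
The strategy is to exploit the multiplicative closure of the set of achievable cardinalities under the $\otimes$ product, which generalizes from the $3$-free setting to the $p$-free setting by an argument paralleling Theorem \ref{thm:pstruc}. Write $\mathcal{C}_p \subseteq \mathbb{N}$ for the set of positive integers that occur as the cardinality of some modular $p$-free set. Since $\mathcal{C}_p$ is closed under multiplication and contains $1$ (witnessed by $\{0\}$ modulo $1$), unique factorization reduces the conjecture to the prime case: it suffices to prove that every prime $q$ lies in $\mathcal{C}_p$, for then $\mathcal{C}_p = \mathbb{N}$ and one may even take $n_p = 1$.

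The plan therefore has two parts. First, handle a finite initial segment of primes via exhaustive computer search, generalizing Table \ref{table:notindep}: for each prime $q \le q_0$, search over increasing moduli $N$ for a modular $p$-free set of cardinality $q$. For $p = 3$, Table \ref{table:notindep} already demonstrates that such searches succeed in producing prime cardinalities such as $13$ and $17$, and extending the search to other primes $q$ and to other odd primes $p$ is computationally routine. Second---and this is the heart of the argument---produce an infinite family of modular $p$-free sets realizing all sufficiently large prime cardinalities. A natural candidate is to generalize the basic-sequence construction of Theorem \ref{thm:pbasic} by allowing a mixed-radix digit system in place of the rigid base-$p$ framework, with the digit alphabet at each level chosen to fine-tune the cardinality of the fundamental block to any prescribed prime $q$. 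Alternatively, one could pursue a probabilistic approach, sampling a random $p$-free subset of $\{0, \ldots, N-1\}$ with density tuned to expect cardinality near $q$, and attempting a second-moment or alteration argument to force the modular-coverage condition to hold with positive probability.

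The decisive obstacle is this second part. Every structural construction presently in the literature---independent sequences, basic sequences, and their $\otimes$-products---yields cardinalities of a controlled multiplicative form, most prominently $(p-1)^k$; none of them naturally produce prime cardinalities. A proof will therefore likely require either a genuinely new combinatorial mechanism that breaks this multiplicative rigidity, or a purely existential argument that bypasses explicit construction. Even for $p = 3$, where the most computational data is available, no construction is currently known to produce arbitrarily large prime cardinalities, which is why the conjecture must be regarded as substantive and open, and why any successful proof will hinge on genuinely new ideas rather than on iterating the existing modular/basic framework.
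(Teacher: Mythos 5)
The statement you were asked to prove is posed in the paper as an open \emph{conjecture}, not a theorem; the paper offers no proof, and your write-up (correctly) ends by concluding the problem is open rather than by closing it. So there is nothing in the paper to compare your argument against, and your ``proof proposal'' is really a strategy sketch plus an admission of defeat.

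That said, two remarks on the substance. Your multiplicative reduction is a sound observation: the set $\mathcal{C}_p$ of achievable cardinalities is a submonoid of $(\mathbb{N},\times)$ (via the $\otimes$-product and the singleton modular set $\{0\}$ modulo $1$), so if every prime lay in $\mathcal{C}_p$ you would have $\mathcal{C}_p=\mathbb{N}$ and could take $n_p=1$. But be careful calling this a ``reduction'': it is only a sufficient condition, and the conjecture as stated would also be implied by weaker structure on $\mathcal{C}_p$ (it requires only that $\mathcal{C}_p$ be co-finite). More importantly, your claim that the initial segment of primes can be handled ``via exhaustive computer search'' and that ``extending the search \ldots to other odd primes $p$ is computationally routine'' is contradicted by the paper itself: the paper emphasizes that for $p>3$ \emph{no} modular $p$-free set of cardinality other than $(p-1)^n$ is currently known, that the only available method is exhaustive search, and that the natural construction algorithm runs in time $O(n^{p-1})$, which is far from routine. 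Even the modest step of exhibiting a single prime cardinality other than a power of $p-1$ for some $p>3$ would be new. Your concluding assessment --- that the existing modular/basic/$\otimes$ framework only produces cardinalities of a rigid multiplicative form and that a genuinely new combinatorial mechanism (or a non-constructive existence argument) is required --- is accurate and is exactly why this remains a conjecture in the paper.
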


\bibliography{stanley}

\begin{thebibliography}{10}

\bibitem{ADS}
T.~Ahmed, J.~Dybizba{\'n}ski, and H.~Snevily.
\newblock Unique sequences containing no {$k$}-term arithmetic progressions.
\newblock {\em Electron. J. Combin.}, 20(4):Paper 29, 24, 2013.

\bibitem{B}
F.~A. Behrend.
\newblock On sets of integers which contain no three terms in arithmetical
  progression.
\newblock {\em Proc.~Nat.~Acad.~of Sci.~USA}, 32:331--332, 1946.

\bibitem{D}
J.~Dybizba{\'n}ski.
\newblock Sequences containing no 3-term arithmetic progressions.
\newblock {\em Electron. J. Combin.}, 19(2):Paper 15, 5, 2012.

\bibitem{E}
M.~Elkin.
\newblock An improved construction of progression-free sets.
\newblock {\em Israel J.~of Math.}, 184:93--128, 2008.

\bibitem{EG}
P.~Erd\H{o}s and R.~L. Graham.
\newblock Old and new problems and results in combinatorial number theory.
\newblock {\em Monographies de L'Enseignement Math\'ematique, Universit\'e de
  Gen\`ve, L'Enseignement Math\'ematique, Gen\`eve}, 28, 1980.

\bibitem{ELRSS}
P.~Erd\H{o}s, V.~Lev, G.~Rauzy, C.~S\'{a}ndor, and A.~S\'{a}rk\"{o}szy.
\newblock Greedy algorithm, arithmetic progressions, subset sums and
  divisibility.
\newblock {\em Discrete Math.}, 200:119--135, 1999.

\bibitem{ET}
P.~Erd\H{o}s and P.~Tur\'an.
\newblock On some sequences of integers.
\newblock {\em J.~London Math.~Soc.}, 11:261--264, 1936.

\bibitem{GGK}
W.~Gasarch, J.~Glenn, and C.~P. Kruskal.
\newblock Finding large 3-free sets. {I}. {T}he small {$n$} case.
\newblock {\em J. Comput. System Sci.}, 74(4):628--655, 2008.

\bibitem{GW}
B.~Green and J.~Wolf.
\newblock A note on {Elkin's} improvement of {Behrend's} construction.
\newblock In {\em Additive number theory}, pages 141--144. New York: Springer,
  2010.

\bibitem{HB}
D.~R. Heath-Brown.
\newblock Integer sets containing no arithmetic progressions.
\newblock {\em J.~London~Math.~Soc.}, 35(2):385--394, 1987.

\bibitem{Wag1}
S.~S.~Wagstaff Jr.
\newblock On sequences of integers with no {$4$}, or no {$5$} numbers in
  arithmetical progression.
\newblock {\em Math. Comp.}, 21:695--699, 1967.

\bibitem{Wag2}
S.~S.~Wagstaff Jr.
\newblock On {$k$}-free sequences of integers.
\newblock {\em Math. Comp.}, 26:767--771, 1972.

\bibitem{L}
S.~Lindhurst.
\newblock An investigation of several interesting sets of numbers generated by
  the greedy algorithm, 1990.
\newblock Senior thesis, Princeton University.

\bibitem{M}
R.~A. Moy.
\newblock On the growth of the counting function of {Stanley} sequences.
\newblock {\em Discrete Math.}, 311:560--562, 2011.

\bibitem{OS}
A.~M. Odlyzko and R.~P. Stanley.
\newblock Some curious sequences constructed with the greedy algorithm, 1978.
\newblock Bell Laboratories internal memorandum.

\bibitem{R}
D.~Rolnick.
\newblock On the classification of {Stanley} sequences.
\newblock arXiv:1408.1940.

\bibitem{RV}
D.~Rolnick and P.~S. Venkataramana.
\newblock On the growth of {Stanley} sequences.
\newblock arXiv:1408.4710.

\bibitem{Roth}
K.~F. Roth.
\newblock On certain sets of integers.
\newblock {\em J.~London Math.~Soc.}, 28:104--109, 1953.

\bibitem{SS}
R.~Salem and D.~C. Spencer.
\newblock On sets of integers which contain no three terms in arithmetical
  progression.
\newblock {\em Proc.~Nat.~Acad.~Sci.~USA}, 28:561--563, 1942.

\bibitem{S}
T.~Sanders.
\newblock On {Roth's} theorem on progressions.
\newblock {\em Ann.~of Math.}, 174:619--636, 2011.

\bibitem{SC}
S.~Savchev and F.~Chen.
\newblock A note on maximal progression-free sets.
\newblock {\em Discrete Math.}, 306:2131--2133, 2006.

\bibitem{W}
J.~Wroblewski.
\newblock Nonaveraging sets.
\newblock http://www.math.uni.wroc.pl/~jwr/non-ave.htm.

\end{thebibliography}
\bibliographystyle{plain}

\end{document}